\providecommand{\U}[1]{\protect\rule{.1in}{.1in}}
\newtheorem{theorem}{Theorem} [section]
\newtheorem{conjecture}[theorem]{Conjecture}
\newtheorem{corollary}[theorem]{Corollary}
\newtheorem{example}[theorem]{Example}
\newtheorem{problem}[theorem]{Problem}
\newenvironment{proof}[1][Proof]{\noindent\textbf{#1.} }{\ \rule{0.5em}{0.5em}}
\begin{document}
  
\author{Vadim E. Levit \\Department of Mathematics\\Ariel University, Ariel, Israel\\levitv@ariel.ac.il \\ \\David Tankus\\Department of Software Engineering\\Sami Shamoon College of Engineering, Ashdod, Israel\\davidt@sce.ac.il}
\title{Recognizing $\mathbf{W_2}$ Graphs}
\date{}
\maketitle

\begin{abstract}
Let $G$ be a graph. A set $S \subseteq V(G)$ is {\it independent} if its elements are pairwise nonadjacent. A vertex $v \in V(G)$ is {\it shedding} if for every independent set $S \subseteq V(G) \setminus N[v]$ there exists $u \in N(v)$ such that $S \cup \{u\}$ is independent. An independent set $S$ is {\it maximal} if it is not contained in another independent set. An independent set $S$ is {\it maximum} if the size of every independent set of $G$ is not bigger than $|S|$. The size of a maximum independent set of $G$ is denoted $\alpha(G)$. A graph $G$ is {\it well-covered} if all its maximal independent sets are maximum, i.e. the size of every maximal independent set is $\alpha(G)$. The graph $G$ belongs to class $\mathbf{W_2}$ if every two pairwise disjoint independent sets in $G$ are included in two pairwise disjoint maximum independent sets. If a graph belongs to the class $\mathbf{W_2}$ then it is well-covered.

Finding a maximum independent set in an input graph is an NP-complete problem. Recognizing well-covered graphs is co-NP-complete. The complexity status of deciding whether an input graph belongs to the $\mathbf{W_2}$ class is not known. Even when the input is restricted to well-covered graphs, the complexity status of recognizing graphs in $\mathbf{W_2}$ is not known.

In this article, we investigate the connection between shedding vertices and $\mathbf{W_2}$ graphs. On the one hand, we prove that recognizing shedding vertices is co-NP-complete. On the other hand, we find polynomial solutions for restricted cases of the problem. We also supply polynomial characterizations of several families of $\mathbf{W_2}$ graphs.
\end{abstract}

\section{Introduction}
\subsection{The classes $\mathbf{W_k}$}
An {\it independent set} of vertices in a graph $G$ is a set of vertices $S \subseteq V(G)$ whose elements are pairwise nonadjacent. An independent set is {\it maximal} if is is not a subset of another independent set. An independent set is {\it maximum} if $G$ does not contain an independent set of a higher cardinality. The cardinality of a maximum independent set in $G$ is denoted $\alpha(G)$. A graph $G$ is {\it well-covered} if all its maximal independent sets are maximum, i.e. the size of every maximal independent set is $\alpha(G)$.

\begin{problem}
\label{wcrecog}
{\bf WC}\\
Input: A graph $G$.\\
Question: Is $G$ well-covered?
\end{problem}

Finding a maximum independent set in an input graph is known to be an NP-complete problem. However, finding a maximal independent set in an input graph can be done polynomially using the greedy algorithm. If the input is restricted to well-covered graphs then the greedy algorithm for finding a maximal independent set always yields a maximum independent set. Hence, finding a maximum independent set in a well-covered graph is a polynomial task. Recognizing well-covered graphs is known to be in co-NP-complete \cite{cs:note} \cite{sknryn:compwc}.

Let $k$ be a positive integer. A graph $G$ belongs to class $\mathbf{W_k}$ if every $k$ pairwise
disjoint independent sets in $G$ are included in $k$ pairwise disjoint maximum independent
sets \cite{Staples:thesis}. It holds that $\mathbf{W_1} \supseteq \mathbf{W_2} \supseteq \mathbf{W_3} \supseteq \ldots$, where $\mathbf{W_1}$ is the family of all well-covered graphs. The {\bf W2} problem and the {\bf WCW2} problem are defined as follows. Their complexity statuses are still open.

\begin{problem}
\label{w2recog}
{\bf W2}\\
Input: A graph $G$.\\
Question: Is $G \in \mathbf{W_2}$ ?
\end{problem}

\begin{problem}
\label{wcw2recog}
{\bf WCW2}\\
Input: A well-covered graph $G$.\\
Question: Is $G \in \mathbf{W_2}$ ?
\end{problem}

In \cite{dlm:disjoint} graphs with two disjoint maximum independent sets are studied.
\begin{theorem}
\label{disjointmis}
\cite{dlm:disjoint}
Let $G$ be a graph. The following assertions are equivalent:
\begin{enumerate}
\item $G$ admits two disjoint maximum independent sets.
\item There exists a matching $M$ of size $\alpha(G)$ such that $G[V (M)]$ is a bipartite graph. 
\item There exists a set $A \subseteq V(G)$ such that $G \setminus A$ is a bipartite graph having a perfect matching of size $\alpha(G)$.
\end{enumerate}
\end{theorem}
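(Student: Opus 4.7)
My plan is to establish the implication cycle $(1) \Rightarrow (2) \Rightarrow (1)$ via a Hall-type exchange argument, and to observe that $(2) \Leftrightarrow (3)$ is merely a restatement. The central idea is that two disjoint maximum independent sets of $G$ live in a bipartite subgraph, and the matching promised in $(2)$ arises as a perfect matching between them.

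For $(1) \Rightarrow (2)$, I would take two disjoint maximum independent sets $S_1$ and $S_2$ and form the auxiliary bipartite graph $H$ whose parts are $S_1$ and $S_2$ and whose edges are precisely those edges of $G$ that go between $S_1$ and $S_2$. A perfect matching of $H$ would have size $|S_1|=\alpha(G)$, and its vertex set would induce a subgraph of $G[S_1 \cup S_2]$, which is bipartite because each $S_i$ is independent in $G$. So the whole implication reduces to verifying Hall's condition in $H$, and this is where I would put the main effort: if some $X \subseteq S_1$ violated it, i.e. $|N_H(X)| < |X|$, then $X \cup (S_2 \setminus N_H(X))$ would be an independent set of $G$ (it is independent inside each side, and there are no cross-edges by construction), with size strictly exceeding $|S_2|=\alpha(G)$, contradicting maximality. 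I expect this exchange to be the only genuinely non-trivial step.

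For $(2) \Rightarrow (1)$, I would fix a bipartition $(A,B)$ of $G[V(M)]$; since $M$ itself is a perfect matching inside $G[V(M)]$ and each edge of $M$ crosses the bipartition, $|A|=|B|=|M|=\alpha(G)$. Each of $A,B$ is independent in $G[V(M)]$, hence in $G$, yielding the required two disjoint maximum independent sets. Finally, $(2) \Leftrightarrow (3)$ is a straight translation: set $A := V(G) \setminus V(M)$ in one direction, and in the other direction take $M$ to be the promised perfect matching of $G \setminus A$. Aside from the Hall verification above, every step is a short bookkeeping argument, so the main obstacle really sits in that single exchange inequality.
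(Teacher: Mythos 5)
Your argument is correct: the Hall-condition verification via the exchange set $X \cup (S_2 \setminus N_H(X))$ is exactly the right non-trivial step, and the implications $(2)\Rightarrow(1)$ and $(2)\Leftrightarrow(3)$ are, as you say, routine bookkeeping. Note, however, that the paper does not prove this theorem at all --- it is imported verbatim from the cited reference of Deniz, Levit and Mandrescu --- so there is no in-paper proof to compare against; your self-contained deduction stands on its own and matches the standard argument for this equivalence.
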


A vertex $v \in V(G)$ is {\it shedding} if for every independent set $S \subseteq V(G) \setminus N[v]$ there exists $u \in N(v)$ such that $S \cup \{u\}$ is independent. Equivalently, $v$ is shedding if there does not exist an independent set in $V(G) \setminus N[v]$ which dominates $N(v)$ \cite{w:shed}. The {\bf SHED} problem is defined as follows.

\begin{problem}
\label{shedprob}
{\bf SHED}\\
Input: A graph $G$ and a vertex $v \in V(G)$.\\
Question: Is $v$ shedding ?
\end{problem}

Let $Shed(G)$ denote the set of all shedding vertices of $G$.

\begin{theorem}
\label{w2shed}
\cite{lm:shed,lm:w2}
For every graph $G$ having no isolated vertices, the following assertions are equivalent:
\begin{enumerate}
\item $G$ is in the class $\mathbf{W_2}$.
\item $G \not\approx P_{3}$ and $G \setminus v$ is well-covered, for every $v \in V(G)$.
\end{enumerate}
\end{theorem}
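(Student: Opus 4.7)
For the forward direction $(1)\Rightarrow(2)$, I would first note that $\mathbf{W_2}\subseteq\mathbf{W_1}$ forces $G$ to be well-covered, and since $P_3$ is not well-covered this also gives $G\not\approx P_3$. For any vertex $v$, I would show $G\setminus v$ is well-covered by taking a maximal independent set $T$ of $G\setminus v$ and splitting on whether $T\cup\{v\}$ is independent in $G$. If it is not independent, then $T$ is already maximal in $G$ (every vertex outside $T$, including $v$, has a neighbor in $T$), giving $|T|=\alpha(G)$ directly. If it is independent, I would apply the $\mathbf{W_2}$ hypothesis to the disjoint independent sets $T$ and $\{v\}$ to obtain disjoint maximum independent sets $M_T\supseteq T$ and $M_v\ni v$; since $v\notin M_T$, the set $M_T$ is independent in $G\setminus v$ and contains the maximal $T$, so $T=M_T$ and $|T|=\alpha(G)$.

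For the reverse direction $(2)\Rightarrow(1)$, my first task is to deduce that $G$ itself is well-covered. Given two maximal independent sets $S_1,S_2$ of $G$, if some vertex $v$ lies outside $S_1\cup S_2$ then both sets remain maximal in $G\setminus v$, forcing $|S_1|=|S_2|$ by well-coveredness of $G\setminus v$. Otherwise $V(G)=S_1\cup S_2$; any vertex of $S_1\cap S_2$ would be isolated in $G$, so $S_1\cap S_2=\emptyset$ and $G$ is bipartite with parts $S_1,S_2$. Assuming $|S_1|<|S_2|$ for contradiction, for each $v\in S_1$ the only isolated vertices produced in $G\setminus v$ are the ``private neighbors'' $u\in S_2$ with $N(u)=\{v\}$, and comparing two maximal independent sets of $G\setminus v$ forces this private-neighbor count to equal $|S_2|-|S_1|+1\ge 2$. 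The symmetric computation with $v\in S_2$ forces $|S_2|=|S_1|+1$, and a double-count of private neighbors then yields $|S_2|\ge 2|S_1|$, so $|S_1|\le 1$; the only remaining possibility $|S_1|=1,|S_2|=2$ gives $G\approx P_3$, a contradiction. Moreover, extending any neighbor of $v$ to a maximal independent set yields a maximum independent set of $G$ avoiding $v$, so $\alpha(G\setminus v)=\alpha(G)$ for every $v$.

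To finish the reverse direction, I would deduce that every vertex $v$ is shedding: otherwise an independent set $S\subseteq V(G)\setminus N[v]$ dominating $N(v)$ extends to a maximal $T$ of $G\setminus v$ still disjoint from $N(v)$, making $T\cup\{v\}$ maximal in $G$ of size $\alpha(G)$, so that $|T|=\alpha(G)-1<\alpha(G\setminus v)$, a contradiction. Given disjoint independent sets $I_1,I_2$, the goal is to build disjoint maximum independent sets $M_1\supseteq I_1,M_2\supseteq I_2$, which is \emph{the main obstacle of the whole theorem}. I would approach this by induction on the potential $2\alpha(G)-|I_1|-|I_2|$: using the shedding property of a carefully chosen vertex in $I_1$ (respectively $I_2$), I would add a compatible neighbor to the other partial extension, while the identity $\alpha(G\setminus u)=\alpha(G)$ keeps both sets extendable to maximum size. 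The delicate point is ensuring that every such step preserves the simultaneous option of reaching $\alpha(G)$ in both sets while maintaining disjointness; shedding supplies exactly the flexibility required for this coordination.
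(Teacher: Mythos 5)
The paper itself offers no proof of this statement---it is quoted from \cite{lm:shed,lm:w2}---so your proposal has to stand on its own. Your forward direction is correct and complete. In the reverse direction, the derivation that $G$ is well-covered (the case split on whether $V(G)=S_1\cup S_2$, the private-neighbor count $|P_v|=|S_2|-|S_1|+1$ for $v\in S_1$, the symmetric count forcing $|S_2|=|S_1|+1$, and the double count reducing to $G\approx P_3$) is sound, as are the conclusions that $\alpha(G\setminus v)=\alpha(G)$ for every $v$ and that every vertex is shedding.

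The genuine gap is your final paragraph. Producing two disjoint \emph{maximum} independent sets around arbitrary disjoint independent sets $I_1,I_2$ is, as you say yourself, ``the main obstacle of the whole theorem,'' and you do not carry it out: you name a potential $2\alpha(G)-|I_1|-|I_2|$ and assert that ``shedding supplies exactly the flexibility required,'' but the inductive step is never specified. Concretely, the shedding property of $v\in I_1$ only yields a neighbor $u\in N(v)$ with $I_2\cup\{u\}$ independent when $I_2\subseteq V(G)\setminus N[v]$, which disjointness of $I_1$ and $I_2$ does not guarantee; and even when a vertex can be added, nothing you wrote shows that the enlarged $I_2$ remains extendable to a maximum independent set that avoids whatever $I_1$ will eventually grow into. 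That coordination is precisely the content of the theorem (equivalently, of the implication $Shed(G)=V(G)\Rightarrow G\in\mathbf{W_2}$ in Theorem \ref{w2wcshed}, which the paper also only cites, so invoking it here would be circular). A plausible way to close the gap is to induct on $\alpha(G)$ after proving that for an independent set $S$ in such a graph, $G\setminus N[S]$ inherits all the hypotheses with $\alpha(G\setminus N[S])=\alpha(G)-|S|$; but that lemma must be stated and proved, not gestured at. As written, the proof is incomplete at its crux.
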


\begin{theorem}
\label{w2wcshed}
\cite{lm:shed,lm:w2}
For every well-covered graph $G$ having no isolated vertices, the following assertions are equivalent:
\begin{enumerate}
\item $G$ is in the class $\mathbf{W_2}$.
\item $G \setminus N[v]$ is in the class $\mathbf{W_2}$, for every $v \in V(G)$.
\item $Shed(G) = V(G)$.
\end{enumerate}
\end{theorem}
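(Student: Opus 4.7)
The plan is to establish the three-way equivalence by proving the cycle $(1) \Rightarrow (3) \Rightarrow (1)$ combined with $(1) \Rightarrow (2) \Rightarrow (3)$, invoking Theorem~\ref{w2shed} both for $G$ itself and for the subgraph $H := G \setminus N[v]$.

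For $(1) \Rightarrow (3)$, given $v$ and an independent set $S \subseteq V(G) \setminus N[v]$, the $\mathbf{W_2}$-property applied to the disjoint pair $(S, \{v\})$ yields disjoint maximum independent sets $M_S \supseteq S$ and $M_v \supseteq \{v\}$; since $v \notin M_S$ and $M_S$ is maximal, some $u \in N(v) \cap M_S$ gives $S \cup \{u\}$ independent. For $(3) \Rightarrow (1)$, I plan to invoke Theorem~\ref{w2shed}: well-coveredness of $G$ rules out $G \approx P_{3}$, and if some $G \setminus v$ failed to be well-covered, a maximal independent set $I$ of $G \setminus v$ with $|I| < \alpha(G)$ would (by well-coveredness of $G$) satisfy $I \cup \{v\}$ independent, placing $I \subseteq V(G) \setminus N[v]$; shedding of $v$ would then produce $u \in N(v)$ with $I \cup \{u\}$ independent, contradicting maximality of $I$ in $G \setminus v$.

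For $(1) \Rightarrow (2)$, I would verify the hypotheses of Theorem~\ref{w2shed} for $H$. (i) $H$ is well-covered: any maximal independent set $I$ of $H$ makes $I \cup \{v\}$ maximal in $G$ of size $\alpha(G)$, so $|I| = \alpha(G) - 1$. (ii) $H$ has no isolated vertex, since an isolated $w \in V(H)$ would satisfy $N_G(w) \subseteq N(v)$, whereupon the independent set $\{v\} \subseteq V(G) \setminus N[w]$ would admit no shedding witness at $w$. (iii) $H \not\approx P_{3}$, else $\{v, b\}$ with $b$ the centre of $P_{3}$ would be a maximal independent set of $G$ of size $2 < \alpha(G) = 3$, contradicting well-coveredness. (iv) Every vertex $u \in V(H)$ is shedding in $H$: given an independent set $T \subseteq V(H) \setminus N_H[u]$, apply shedding of $u$ in $G$ to $T \cup \{v\} \subseteq V(G) \setminus N[u]$ to obtain $z \in N_G(u)$ non-adjacent to $T \cup \{v\}$; independence of $z$ from $v$ forces $z \in N_H(u)$, as required. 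Theorem~\ref{w2shed} applied to $H$ then gives $H \in \mathbf{W_2}$.

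For $(2) \Rightarrow (3)$, suppose for contradiction that some $v \in V(G)$ is not shedding, so there is an independent set $S \subseteq V(G) \setminus N[v]$ dominating $N(v)$; since $G$ has no isolated vertex, $S$ is nonempty, so pick $y \in S$. Condition (2) gives $G \setminus N[y] \in \mathbf{W_2}$, so by the already-proved $(1) \Rightarrow (3)$ applied to $G \setminus N[y]$, the vertex $v$ (which lies in $V(G) \setminus N[y]$) is shedding in $G \setminus N[y]$. Applying this shedding to $T := S \setminus \{y\} \subseteq V(G) \setminus (N[v] \cup N[y])$ produces $u \in N(v) \setminus N[y]$ with $T \cup \{u\}$ independent; since $y \not\sim u$ and $y$ is independent of $T$, the set $S \cup \{u\} = T \cup \{y, u\}$ is independent with $u \in N(v)$, contradicting that $S$ dominates $N(v)$. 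Combined with $(3) \Rightarrow (1)$, this closes the implication $(2) \Rightarrow (1)$. The main obstacle I anticipate is condition (iv) of $(1) \Rightarrow (2)$: the shedding lift from $G$ to $H$ hinges on the trick of augmenting the test set by $v$ and on carefully tracking the interactions among $N_G[u]$, $N_H[u]$, and $N[v]$; the $(2) \Rightarrow (3)$ step, while short, similarly requires choosing the auxiliary vertex $y \in S$ so that shedding in $G \setminus N[y]$ projects back to an independent extension of $S$ in $G$.
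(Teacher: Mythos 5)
The paper does not prove Theorem \ref{w2wcshed} at all; it imports it from \cite{lm:shed,lm:w2}, so there is no in-paper argument to compare against. Your derivation of it from Theorem \ref{w2shed} is essentially correct and self-contained: $(1)\Rightarrow(3)$ via extending the disjoint pair $(S,\{v\})$, $(3)\Rightarrow(1)$ via Theorem \ref{w2shed} (using that a non-maximum maximal independent set of $G\setminus v$ must miss $N[v]$ entirely and would violate shedding), the lifting of well-coveredness and sheddingness from $G$ to $H=G\setminus N[v]$ by augmenting test sets with $v$, and the $(2)\Rightarrow(3)$ step via the auxiliary vertex $y\in S$ all check out. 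Two small points deserve tightening. First, at the end of $(1)\Rightarrow(2)$ you say "Theorem \ref{w2shed} applied to $H$ then gives $H\in\mathbf{W_2}$," but the hypothesis of Theorem \ref{w2shed} is that $H\setminus w$ is well-covered for every $w$, not that every vertex of $H$ is shedding; what you actually need here is your own implication $(3)\Rightarrow(1)$ applied to $H$ (legitimate, since you have shown $H$ is well-covered with no isolated vertices and $Shed(H)=V(H)$, and that implication was proved for an arbitrary such graph), so the wiring should be stated that way. Second, step (iii) is redundant -- $P_3$ is not well-covered, so (i) already excludes $H\approx P_3$ -- and your "$\alpha(G)=3$" there should at best be "$\alpha(G)\ge 3$"; you may also wish to dispose explicitly of the degenerate case $N[v]=V(G)$, where $H$ is empty and membership in $\mathbf{W_2}$ holds vacuously. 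None of these affects the correctness of the overall argument.
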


This article finds polynomial solutions for several restricted cases of Problem \ref{w2recog} and Problem \ref{wcw2recog}. We also prove that Problem \ref{shedprob} is co-NP-complete.

\subsection{Definitions and notation}
Throughout this article the following notation and definitions are used. Graphs are undirected, simple, and loopless. The number of vertices in a graph $G$ is $n = |V(G)|$, and the number of edges is $m = |E(G)|$. If $G$ contains cycles than the {\it girth} of $G$ is the minimal length of a cycle in the graph. Otherwise, the {\it girth} of $G$ is infinite.

Let $v \in V(G)$. The set of vertices adjacent to $v$ in $G$ is denoted $N_{G}(v)$. Define $N_{G}[v] = \{v\} \cup N_{G}(v)$. The {\it degree} of $v$ in $G$ is $d_{G}(v) = |N_{G}(v)|$. If $G$ is the only graph mentioned in the context then $N_{G}(v)$, $N_{G}[v]$ and $d_{G}(v)$ are abbreviated to $N(v)$, $N[v]$ and $d(v)$, respectively. For every $S \subseteq V(G)$ denote $N[S] = \cup_{v \in S} N[v]$ and $N(S) = N[S] \setminus S$.

Let $S$ and $T$ be sets of vertices of $G$. Then $S$ {\it dominates} $T$ if $T \subseteq N[S]$. If $S$ dominates $V(G)$ then $S$ is a dominating set. A set $S \subseteq V(G)$ is a {\it clique} if every two vertices of $S$ are adjacent to each other. A vertex $v \in V(G)$ is {\it simplicial} if $N[v]$ is a clique.

\section{Recognizing shedding vertices}
\subsection{Co-NP-completeness}
Let $ {\cal X}=\{x_{1},\ldots x_{n}\}$ be a set of 0-1 variables. We define the set of
\textit{literals} $L_{\cal X}$ over $\cal X$ by $L_{\cal X} = \{x_{i}, \overline{x_{i}} : i = 1,\ldots ,n\}$, where $\overline{x} = 1 - x$ is the \textit{negation} of $x$. A
\textit{truth assignment} to $\cal X$ is a mapping $t:{\cal X} \longrightarrow\{0,1\}$
that assigns a value $t(x_{i}) \in\{0,1\}$ to each variable $x_{i} \in \cal X$. We
extend $t$ to $L_{\cal X}$ by putting $t(\overline{x_{i}}) = \overline{t(x_{i})}$.
A literal $l \in L_{\cal X}$ is true under $t$ if $t(l) = 1$. A \textit{clause}
over $\cal X$ is a conjunction of some literals of $L_{\cal X}$, such that for every variable $x \in \cal X$, the clause contains at most one literal out of $x$ and its negation. Let ${\cal C}=\{c_{1},\ldots ,c_{m}\}$ be a set of clauses over $\cal X$. A truth assignment $t$ to $\cal X$
\textit{satisfies} a clause $c_{j} \in \cal C$ if $c_{j}$ contains at least one
true literal under $t$. The {\bf SAT} problem, defined as follows, is a well-known \textbf{NP}-complete problem \cite{gj:NPC}.

\begin{problem}
\label{satprob}
{\bf SAT}\\
Input: A set of variables ${\cal X}=\{x_{1}, \ldots ,x_{n}\}$, and a set of clauses ${\cal C}=\{c_{1}, \ldots ,c_{m}\}$ over $\cal X$.\\
Question:  Is there a truth assignment to $\cal X$ which satisfies all
clauses of $\cal C$?
\end{problem}

\begin{theorem}
\cite{gj:NPC}
\label{satNPC} The {\bf SAT} problem is NP-complete.
\end{theorem}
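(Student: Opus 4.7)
This is the classical Cook--Levin theorem; the plan is to establish the two halves of NP-completeness separately. Membership of {\bf SAT} in NP is immediate: a truth assignment $t:{\cal X}\to\{0,1\}$ is itself a polynomial-size certificate, and checking whether $t$ satisfies every clause of $\cal C$ requires only a single scan of the input.

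For NP-hardness I would reduce from a generic language $L$ in NP. Fix a nondeterministic Turing machine $M$ deciding $L$ in time $p(n)$ for some polynomial $p$. Given an input $x$ of length $n$, I would construct a propositional formula $\varphi_{M,x}$ over variables encoding, for each time step $\tau \in \{0,\ldots,p(n)\}$ and each tape position $i \in \{-p(n),\ldots,p(n)\}$, the symbol written in cell $i$ at time $\tau$, the current state of $M$, and the location of the head. The clauses of $\varphi_{M,x}$ would enforce four families of constraints: well-formedness (exactly one symbol per cell, exactly one active state, exactly one head position per time step); the initial configuration corresponding to $x$ on a blank tape; locally legal transitions compatible with the nondeterministic options in $\delta$; and acceptance of $M$ by time $p(n)$. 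Each constraint can be written as a constant-size conjunction of clauses per cell-time pair, so $\varphi_{M,x}$ has polynomial size in $n$ and is computable in polynomial time.

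The main obstacle is bookkeeping rather than insight: one must phrase the \emph{exactly-one-of} and local transition constraints as CNF clauses without blowing up the size, and then verify that satisfying assignments of $\varphi_{M,x}$ correspond bijectively to accepting computations of $M$ on $x$. Once this correspondence is checked, $\varphi_{M,x}$ is satisfiable if and only if $x \in L$, which yields the required polynomial-time reduction and, combined with membership in NP, completes the proof. Since the result is needed here only as a starting point for the later reduction to {\bf SHED}, I would actually be content to cite the standard proof in \cite{gj:NPC} rather than reproduce it in full.
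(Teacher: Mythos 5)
Your sketch is a correct outline of the standard Cook--Levin argument (certificate verification for membership in NP, plus the tableau encoding of a nondeterministic computation for hardness), and the paper itself offers no proof of this theorem at all---it simply cites \cite{gj:NPC}, exactly as you propose to do in your final sentence. There is nothing to compare beyond that: your approach coincides with the classical proof the citation points to.
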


The {\bf WCSHED} problem, defined as follows, is closely related to the {\bf SHED} problem.

\begin{problem}
\label{wcshedrecog}
{\bf WCSHED}\\
Input: A well-covered graph $G$ and a vertex $v \in V(G)$.\\
Question: Is $v$ shedding ?
\end{problem}

In the {\bf WCSHED} problem, the input includes a graph $G$ such that for every $x \in V(G)$ and for every maximal independent set $S$ of $V(G) \setminus N[x]$ it holds that $\alpha(N(x) \setminus N(S)) \leq 1$. The question is whether for a specific vertex $v \in V(G)$ it holds that $\alpha(N(v) \setminus N(S)) = 1$ for every maximal independent set $S$ of $V(G) \setminus N[v]$. 

The complexity status of the {\bf WCSHED} problem is not known. However, the complexity status of the {\bf SHED} problem is found by Theorem \ref{shedc3conpc}.

\begin{theorem}
\label{shedc3conpc}
The {\bf SHED} problem is co-NP-complete, even if its input is restricted to graphs without cycles of length 3.
\end{theorem}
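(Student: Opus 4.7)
The plan is to establish co-NP membership first and then reduce \textbf{SAT} to the complement of \textbf{SHED}, ensuring the output graph contains no triangle. Membership is immediate: an independent set $S\subseteq V(G)\setminus N[v]$ that dominates $N(v)$ is a succinct polynomial-time-checkable witness that $v$ is \emph{not} shedding. It therefore suffices to produce, from any \textbf{SAT} instance $(\mathcal{X},\mathcal{C})$ with $\mathcal{X}=\{x_{1},\dots,x_{n}\}$ and $\mathcal{C}=\{c_{1},\dots,c_{m}\}$, a triangle-free graph $G$ together with a distinguished vertex $v$ such that $\mathcal{C}$ is satisfiable if and only if $v$ fails to be shedding in $G$.

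For the reduction I would take
$V(G)=\{v\}\cup\{c_{1},\dots,c_{m}\}\cup\{x_{i},\overline{x_{i}}:1\leq i\leq n\}$
with three families of edges: an edge $v\,c_{j}$ for every clause (so that $N(v)=\{c_{1},\dots,c_{m}\}$); the matching edge $x_{i}\,\overline{x_{i}}$ for every variable (enforcing consistency of selected literals); and a literal-to-clause edge $\ell\,c_{j}$ precisely when the literal $\ell$ occurs in $c_{j}$. No other edges are added; the construction is clearly polynomial in $n+m$.

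Correctness then runs along a short chain of equivalences. The vertex $v$ fails to be shedding iff some independent set $S\subseteq V(G)\setminus N[v]$ dominates $N(v)$. Such an $S$ lies entirely in the literal vertices; independence of $S$ forbids selecting both $x_{i}$ and $\overline{x_{i}}$; and domination of $N(v)$ forces each clause vertex $c_{j}$ to have at least one neighbor in $S$, i.e.\ at least one of its literals is in $S$. Reading $\ell\in S$ as ``$\ell$ is true'' turns $S$ into a (possibly partial) satisfying assignment, completable arbitrarily on unconstrained variables; conversely any satisfying assignment yields a qualifying~$S$.

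The step that demands the most care is the triangle-free restriction. No triangle can pass through $v$, since $N(v)$ is an independent set of clause vertices; three literal vertices cannot form a triangle, since the only literal-literal edges form the matching $\{x_{i}\,\overline{x_{i}}\}$; and a triangle consisting of one clause $c_{j}$ and two literals would require $c_{j}$ to be adjacent to both endpoints of some matching edge $x_{i}\,\overline{x_{i}}$, which is impossible because each clause contains at most one of $x_{i},\overline{x_{i}}$. Together with the polynomial size of the reduction and the co-NP upper bound, this establishes the theorem.
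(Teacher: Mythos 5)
Your proof is correct, and it follows the same overall strategy as the paper: co-NP membership via the dominating-independent-set witness, followed by a reduction from \textbf{SAT} to the complement of \textbf{SHED} that produces a triangle-free graph. The difference is in the gadget. The paper creates one vertex $u_{i,l}$ for each \emph{occurrence} of a literal $l$ in a clause $c_{i}$, puts the clause vertices in $N(v)$ and the occurrence vertices in $N_{2}(v)$, and enforces consistency by joining every occurrence of $l$ to every occurrence of $\overline{l}$ (so the literal layer is a disjoint union of complete bipartite graphs, one per variable). You instead use a single vertex per literal with the perfect matching $x_{i}\,\overline{x_{i}}$ enforcing consistency, and attach clause vertices directly to the literals they contain. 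Both constructions are polynomial and both verifications of triangle-freeness and of the equivalence with satisfiability go through; your case analysis for triangle-freeness correctly uses the convention (also adopted in the paper's definition of a clause) that no clause contains both a literal and its negation. Your gadget is slightly more economical ($1+m+2n$ vertices versus $1+m+\sum_{j}|c_{j}|$), while the paper's occurrence-based gadget keeps the witness set inside $N_{2}(v)$, which is the form reused elsewhere in the paper; neither difference affects correctness.
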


\begin{proof}
Let $I=(G, v)$ be an instance of the {\bf SHED} problem. An independent set $S \subseteq N_{2}(v)$ which dominates $N(v)$, if exists, is a witness that $v$ is not a shedding vertex, and $I$ is negative. Therefore, the problem is co-NP. We prove co-NP-completeness by a supplying a polynomial reduction from the {\bf SAT} problem to the complement of the {\bf SHED} problem.

Let $I_{1} = ({\cal X}=\{x_{1},\ldots,x_{n}\}, {\cal C}=\{c_{1},\ldots,c_{m}\})$  be an instance of the {\bf SAT} problem. Define a graph $G$ as follows.  
\[V(G) = \{v\} \cup \{v_{i} : 1 \leq i \leq m\} \cup U,\] where $U = \{ u_{i,l} : l \in c_{i} \}$.
\[E(G) = \{vv_{i} : 1 \leq i \leq m\} \cup \{v_{i}u_{i,l} : l \in c_{i} \} \cup E(U),\] 
where $E(U) = \{ u_{i,l}u_{j,\overline{l}} : l \in c_{i}, \overline{l} \in c_{j} \}$.

The subgraph induced by $U$ is bipartite, since every edge connects instances of a literal and its negation. Moreover,  $v$ is not on a triangle since its neighbors are independent. A vertex $v_{i}$ is not on a triangle since the clause $c_{i}$ does not contain both a literal and its negation. Therefore, $G$ does not contain cycles of length 3.

Let $I_{2} = (G, v)$ be an instance of the complement of the {\bf SHED} problem. It remains to prove that $I_{1}$ and $I_{2}$ are eqiuvalent.

Suppose $I_{1}$ is positive. There exists a truth assignment, $t$, to $\cal X$ which satisfies all clauses of $\cal C$. Let $S = \{ u_{i,l} : l \in c_{i}, t(l) = 1 \}$. The fact that $S$ does not contain both a literal and its negation implies that $S$ is independent. The fact that $t$ satisfies all clauses of $\cal C$ implies that $S$ dominates $N(v) = \{v_{i} : 1 \leq i \leq m\}$. Therefore, $v$ is not shedding, and $I_{2}$ is positive.

On the other hand, if $I_{2}$ is positive there exists an independent set $S \subseteq N_{2}(v)$ which dominates $N(v)$. Let $t$ be a truth assignment such that $t(l)=1$ for every literal $l$ which has an instance $u_{l} \in S$. This assignment is valid since $S$ does not contain instances of both a literal and its negation. The fact that $S$ dominates $N(v) = \{v_{i} : 1 \leq i \leq m\}$ implies that all clauses are satisfied by $t$. Therefore, $I_{1}$ is positive. 
\end{proof}

\begin{example}
$I_{1} = ({\cal X},{\cal C} )$ is an instance of the {\bf SAT} problem, where ${\cal X} = \{x_{1}, x_{2}, x_{3}, x_{4}\}$, 
\[{\cal C} = \{c_{1} = (x_{1}, \overline{x_{2}}, x_{3}), c_{2}=(x_{2},x_{4}), c_{3}=(\overline{x_{1}}, \overline{x_{3}}, \overline{x_{4}}), c_{4}=(x_{1}, \overline{x_{2}}, \overline{x_{3}}, \overline{x_{4}})\}\] 
$I_{2} = (G, v)$ is an equivalent instance of the complement of the {\bf SHED} problem, where $G$ is the graph shown in Figure \ref{satfig}.

The satisfying truth assignment $t(x_{1}) = t(x_{2}) = 1$, $t(x_{3}) = t(x_{4})=0$ corresponds to the independent set $\{u_{1,x_{1}}, u_{2,x_{2}}, u_{3,\overline{x_{3}}},u_{3,\overline{x_{4}}}, u_{4,x_{1}}, u_{4, \overline{x_{3}}}, u_{4,\overline{x_{4}}}\} \subseteq N_{2}(v)$ which dominates $N(v)$.
\end{example}

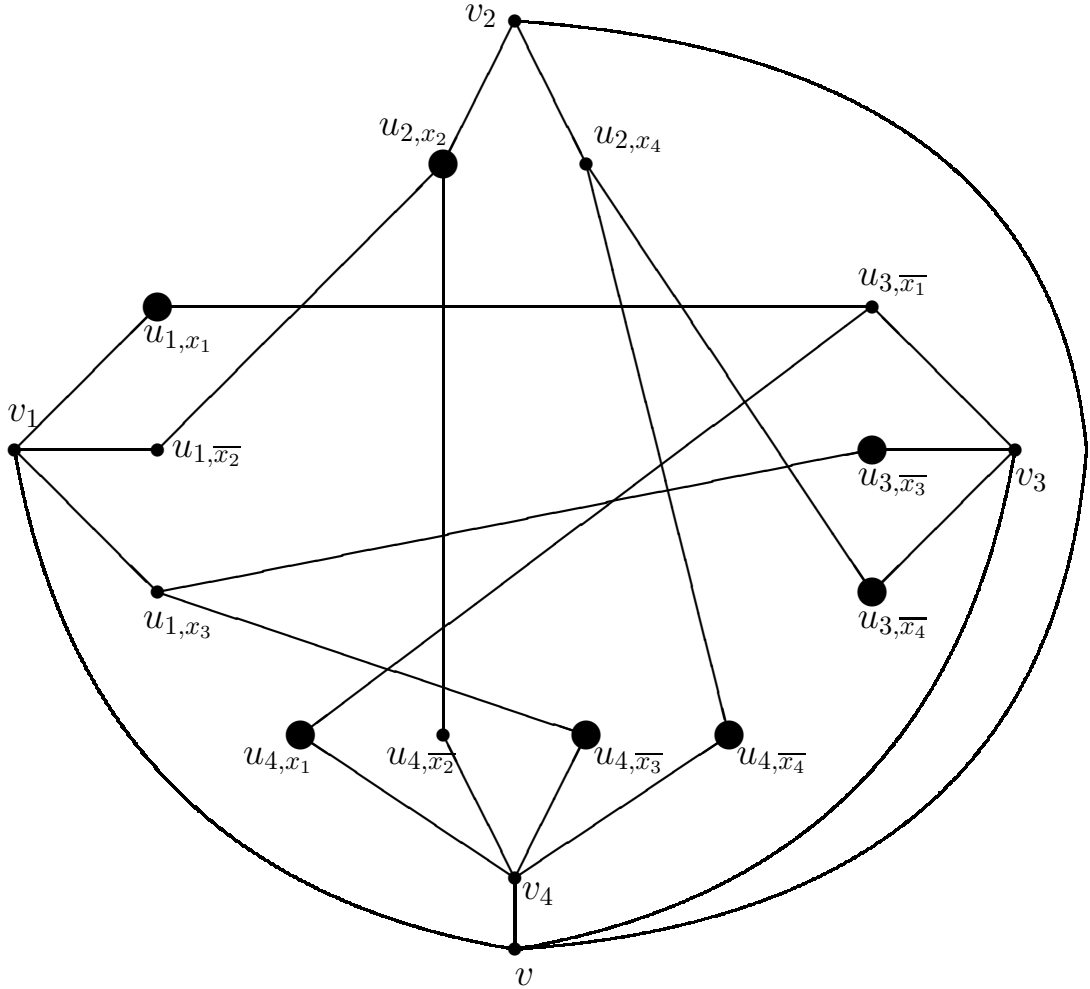
\begin{figure}[h]
\setlength{\unitlength}{0.95cm} \begin{picture}(14,14)\thicklines
\multiput(6,12)(2,0){2}{\circle*{0.2}}
\multiput(4,4)(2,0){4}{\circle*{0.2}}
\multiput(2,10)(0,-2){3}{\circle*{0.2}}
\multiput(12,10)(0,-2){3}{\circle*{0.2}}
\put(7,14){\circle*{0.2}}
\put(7,2){\circle*{0.2}}
\put(0,8){\circle*{0.2}}
\put(14,8){\circle*{0.2}}
\put(-0.1,8.4){\mbox{\Large $v_{1}$}}
\put(14,7.5){\mbox{\Large $v_{3}$}}
\put(6.3,14){\mbox{\Large $v_{2}$}}
\put(7.1,1.7){\mbox{\Large $v_{4}$}}
\put(0,8){\line(1,0){2}}
\put(0,8){\line(1,-1){2}}
\put(0,8){\line(1,1){2}}
\put(14,8){\line(-1,0){2}}
\put(14,8){\line(-1,-1){2}}
\put(14,8){\line(-1,1){2}}
\put(7,2){\line(1,2){1}}
\put(7,2){\line(3,2){3}}
\put(7,2){\line(-1,2){1}}
\put(7,2){\line(-3,2){3}}
\put(7,14){\line(-1,-2){1}}
\put(7,14){\line(1,-2){1}}
\put(1.8,9.5){\mbox{\Large $u_{1,x_{1}}$}}
\put(2.2,7.9){\mbox{\Large $u_{1,\overline{x_{2}}}$}}
\put(1.8,5.5){\mbox{\Large $u_{1,x_{3}}$}}
\put(11.8,10.3){\mbox{\Large $u_{3,\overline{x_{1}}}$}}
\put(11.8,7.5){\mbox{\Large $u_{3,\overline{x_{3}}}$}}
\put(11.8,5.5){\mbox{\Large $u_{3,\overline{x_{4}}}$}}
\put(5.1,12.4){\mbox{\Large $u_{2,x_{2}}$}}
\put(8.1,12.3){\mbox{\Large $u_{2,x_{4}}$}}
\put(3.2,3.6){\mbox{\Large $u_{4,x_{1}}$}}
\put(5.2,3.6){\mbox{\Large $u_{4,\overline{x_{2}}}$}}
\put(8.1,3.6){\mbox{\Large $u_{4,\overline{x_{3}}}$}}
\put(10.1,3.6){\mbox{\Large $u_{4,\overline{x_{4}}}$}}
\put(2,10){\line(1,0){10}}
\put(12,10){\line(-4,-3){8}}
\put(2,8){\line(1,1){4}}
\put(2,6){\line(5,1){10}}
\put(2,6){\line(3,-1){6}}
\put(6,12){\line(0,-1){8}}
\put(8,12){\line(2,-3){4}}
\put(8,12){\line(1,-4){2}}
\put(7,1){\circle*{0.2}}
\put(7,1){\line(0,1){1}}
\qbezier(7,1)(13,2)(14,8)
\qbezier(7,1)(1,2)(0,8)
\qbezier(7,1)(14.5,1.5)(15,8)
\qbezier(7,14)(14.5,13.5)(15,8)
\put(7,0.5){\mbox{\Large $v$}}
\put(2,10){\circle*{0.4}}
\put(6,12){\circle*{0.4}}
\put(12,8){\circle*{0.4}}
\put(12,6){\circle*{0.4}}
\put(4,4){\circle*{0.4}}
\put(8,4){\circle*{0.4}}
\put(10,4){\circle*{0.4}}
\end{picture}\caption{The graph $G$.}%
\label{satfig}
\end{figure}

\subsection{Non-shedding verices and relating edges}
Let $G$ be a graph and $xy \in E(G)$. Then $xy$ is {\it relating} if there exists an independent set $S \subseteq V(G) \setminus N[\{x,y\}]$ such that each of $S \cup \{x\}$ and $S \cup \{y\}$ is a maximal independent set of $G$. The {\bf RE} problem, defined as follows, is known to be NP-complete.

\begin{problem}
\label{reprob}
{\bf RE}\\
Input: A graph $G$ and an edge $e \in E(G)$.\\
Question: Is $e$ relating ?
\end{problem}

\begin{theorem}
\label{renpc}
\cite{bnz:wcc4}
The {\bf RE} problem is NP-complete.
\end{theorem}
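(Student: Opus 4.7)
The plan is to verify membership in NP directly and to obtain NP-hardness by a one-line reduction from the complement of \textbf{SHED}, which is NP-complete by Theorem \ref{shedc3conpc}.

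For NP membership, an independent set $S \subseteq V(G) \setminus N[\{x,y\}]$ is a polynomial-size certificate. One checks in polynomial time that $S$ is independent and that $N[S] \cup N[x] = V(G) = N[S] \cup N[y]$; these two equalities are precisely the statements that $S \cup \{x\}$ and $S \cup \{y\}$ are maximal independent sets of $G$.

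For NP-hardness, given a \textbf{SHED} instance $(G, v)$ I form $G'$ by appending a single new pendant vertex $y$ adjacent only to $v$, rename $v$ as $x$, and output $(G', xy)$. Since $y$ has degree one in $G'$, $V(G') \setminus N_{G'}[\{x,y\}] = V(G) \setminus N_G[v]$, so candidate witnesses $S$ for \textbf{RE} on $(G', xy)$ live in the same vertex set as candidate witnesses that $v$ is not shedding in $G$. Maximality of $S \cup \{x\}$ in $G'$ reduces to $S$ being a maximal independent set in the induced subgraph on $V(G) \setminus N_G[v]$ (the pendant $y$ is already dominated by $x$). Maximality of $S \cup \{y\}$ requires, in addition, that $S$ dominate every vertex of $V(G) \setminus \{v\}$, and in particular every vertex of $N_G(v)$ --- which is exactly the condition that $v$ is not shedding. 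Conversely, any independent $S_0 \subseteq V(G) \setminus N_G[v]$ dominating $N_G(v)$ can be greedily extended to a maximal independent set of the induced subgraph on $V(G) \setminus N_G[v]$ that still dominates $N_G(v)$, producing a valid \textbf{RE} witness.

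The main obstacle is simply verifying that no extra ways for $S \cup \{y\}$ to be maximal sneak in beyond dominance of $N_G(v)$; this is immediate because $y$'s only neighbor in $G'$ is $x$, so every vertex of $N_G(v)$ is non-adjacent to $y$ and must be dominated from $S$ itself. As a byproduct, since attaching a pendant creates no new triangles, the reduction preserves the triangle-free restriction of Theorem \ref{shedc3conpc}, giving the stronger conclusion that \textbf{RE} is NP-complete even on graphs of girth at least four.
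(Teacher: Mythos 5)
The paper states Theorem \ref{renpc} purely as a citation to \cite{bnz:wcc4} and supplies no proof of its own, so there is no internal argument to measure yours against; I can only assess your proof on its merits, and it is correct. Membership in NP is verified in the standard way (the certificate $S$ is checked for independence and for $N[S]\cup N[x]=V(G)=N[S]\cup N[y]$, with independence of $S\cup\{x\}$ and $S\cup\{y\}$ guaranteed by $S\subseteq V(G)\setminus N[\{x,y\}]$). For hardness, your pendant-vertex reduction from the complement of \textbf{SHED} is sound: since $y$'s only neighbour is $x$, the edge $xy$ is relating in $G'$ exactly when some independent $S\subseteq V(G)\setminus N_G[v]$ is maximal in the induced subgraph on $V(G)\setminus N_G[v]$ and dominates $N_G(v)$, and any witness that $v$ is not shedding extends greedily to such an $S$. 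Crucially there is no circularity, because the paper proves Theorem \ref{shedc3conpc} by a direct reduction from \textbf{SAT}, independently of Theorem \ref{renpc}; composing the two reductions gives \textbf{SAT} $\leq_p$ \textbf{RE}. Your reduction is precisely the reverse of the one the paper alludes to but declines to present (from \textbf{RE} to the complement of \textbf{SHED}), and the two directions together substantiate the paper's informal remark that non-shedding vertices and relating edges are witnessed by the same kind of dominating independent set. What your route buys, beyond making the citation self-contained within this paper's machinery, is the strengthening that \textbf{RE} remains NP-complete on triangle-free graphs, since the construction of Theorem \ref{shedc3conpc} avoids triangles and attaching a pendant vertex creates no new cycles; what it does not reproduce is whatever structural restriction the original argument of \cite{bnz:wcc4} achieves (that paper concerns graphs with no cycles of length $4$).
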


Non-shedding verices and relating edges are closely related notions. A witness for their existence is an independent set of vertices, which dominates all vertices of the graph except the non-shedding vertex or the endpoints of the relating edge. Moreover, one can prove that {\bf SHED} problem is co-NP-complete by a polynomial reduction from the {\bf RE} problem to the complement of the {\bf SHED} problem.
We do not present the reduction and the proof of its correctness because this result is a restricted case of Theorem \ref{shedc3conpc}, in which cycles of length 3 are forbidden. Also Theorem \ref{shedrec6} shows the connection between non-shedding vertices and relating edges.

\begin{theorem}
\label{shedrec6}
Let $G$ be a graph without cycles of lengths 4, 5 and 6, and $xy \in E(G)$. Suppose $N(x) \cap N(y) = \varnothing$, $d(x) \geq 2$ and $d(y) \geq 2$. The following assertions are equivalent:
\begin{enumerate}
\item None of $x$ and $y$ is a shedding vertex.
\item $xy$ is a relating edge.
\end{enumerate}
\end{theorem}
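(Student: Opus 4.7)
Write $A = N(x)\setminus\{y\}$, $B = N(y)\setminus\{x\}$, and $R = V(G)\setminus N[\{x,y\}]$. The hypothesis $N(x)\cap N(y)=\varnothing$ makes $V(G)$ the disjoint union $\{x,y\}\sqcup A\sqcup B\sqcup R$, and $d(x),d(y)\ge 2$ forces $A,B\ne\varnothing$. Unpacking the definitions, $xy$ is a relating edge precisely when there is an independent set $S\subseteq R$ with $N[S]\supseteq A\cup B\cup R$, while $x$ is non-shedding precisely when there is an independent set $S_x\subseteq B\cup R$ with $\{y\}\cup A\subseteq N[S_x]$ (and symmetrically for $y$).

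The direction $(2)\Rightarrow(1)$ should be quick. Given a witness $S$, I pick any $u\in B$ (available since $d(y)\ge 2$) and set $S':=(S\setminus N(u))\cup\{u\}$. It is independent, sits inside $V\setminus N[x]=B\cup R$, and dominates $y$ through $u$. For each $w\in A$, domination by $S$ yields a neighbour $s\in S\subseteq R$; if $s$ were adjacent to $u$, the vertices $x,w,s,u,y$ would be pairwise distinct by the partition and form the $5$-cycle $x\,w\,s\,u\,y\,x$, contradicting the girth hypothesis. Hence $s\in S\setminus N(u)\subseteq S'$, so $S'$ dominates $N(x)$ and $x$ is non-shedding; the symmetric argument with some $u'\in A$ handles $y$.

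For $(1)\Rightarrow(2)$, I first use $S_x$ and $S_y$ to produce, for every vertex of $A\cup B$, a neighbour in $R$. Any neighbour of $a\in A$ in $S_x\subseteq B\cup R$ that happened to lie in $B$ would close the $4$-cycle $x\,a\,\cdot\,y\,x$, so the neighbour lies in $R$; symmetrically every $b\in B$ has a neighbour in $R$. I then select $r_a\in N(a)\cap R$ for each $a\in A$ and $r_b\in N(b)\cap R$ for each $b\in B$, and set $S_0=\{r_a:a\in A\}\cup\{r_b:b\in B\}$. Once $S_0$ is shown to be independent, I extend it greedily to a maximal independent subset $S$ of $G[R]$; this $S$ dominates $A$ via the $r_a$'s, dominates $B$ via the $r_b$'s, and dominates $R$ by maximality in $G[R]$, so $N[S]\supseteq V(G)\setminus\{x,y\}$ and $S$ witnesses that $xy$ is relating.

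The main obstacle is the independence of $S_0$, which is where all three forbidden cycle lengths enter simultaneously. A hypothetical edge $r_ar_{a'}$ with $a\ne a'$ closes the $5$-cycle $a\,r_a\,r_{a'}\,a'\,x\,a$; an edge $r_br_{b'}$ closes the analogous $5$-cycle through $y$; and an edge $r_ar_b$ closes the $6$-cycle $a\,r_a\,r_b\,b\,y\,x\,a$, whose six vertices are pairwise distinct by the partition. Naive alternatives such as $S_x\cap R$ or $(S_x\cup S_y)\cap R$ break down, failing either domination or independence; replacing the full non-shedding witnesses by the minimal private-neighbour system $S_0$ is precisely what lets the three girth conditions cooperate.
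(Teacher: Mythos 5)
Your proof is correct and follows the same overall strategy as the paper: in both directions the witnesses are converted into one another, with the absence of $C_{4}$ pushing the non-shedding witnesses into $R=V(G)\setminus N[\{x,y\}]$, the absence of $C_{5}$ making the "restrict and add a neighbour of the other endpoint" step work for $2\Rightarrow 1$, and a longer forbidden cycle guaranteeing independence of the combined set for $1\Rightarrow 2$. The one substantive divergence is in $1\Rightarrow 2$: the paper simply takes $S_{x}\cup S_{y}$ (with $S_{x}\subseteq N_{2}(x)$, $S_{y}\subseteq N_{2}(y)$) and rules out a crossing edge via a single $6$-cycle, whereas you first extract a system of private neighbours $r_{a},r_{b}\in R$ and need all three forbidden lengths ($5$-cycles through $x$ or $y$ alone, a $6$-cycle through both) to certify independence. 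Your variant is marginally longer but also slightly more robust: the paper's claim that no $C_{4}$ forces $S_{x}\cap N(y)=\varnothing$ silently ignores a vertex $u\in S_{x}$ whose \emph{only} neighbour in $N(x)$ is $y$ itself (no $4$-cycle arises there), while your construction never uses such vertices, since $y$ is dominated by $x$ or contained in the final maximal independent sets and so needs no dominator from $R$.
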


\begin{proof}

$1 \implies 2$  Since $x$ is not shedding, there exists an independent set $S_{x} \subseteq N_{2}(x)$ which dominates $N(x)$. Similarly, there exists an independent set $S_{y} \subseteq N_{2}(y)$ which dominates $N(y)$. The fact that $G$ does not contain cycles of lengths 4 implies that $S_{x} \cap N(y) = \varnothing$ and $S_{y} \cap N(x) = \varnothing$. Define $S = S_{x} \cup S_{y}$. Then $S \cap N(\{x,y\}) = \varnothing$. Assume, on the contrary, that $S$ is not independent. Then there exists $x'' \in S_{x} \setminus N(y)$ and $y'' \in S_{y} \setminus N(x)$ such that $x''y'' \in E(G)$. Let $x' \in N(x'') \cap N(x)$ and $y' \in N(y'') \cap N(y)$. Then $(x'',x',x,y,y',y'')$ is a cycle of length 6, which is a contradiction. Therefore, $S$ is independent. Let $S^{*}$ be a maximal independent set of $G \setminus N[\{x,y\}]$ which contains $S$. Then each of $S^{*} \cup \{x\}$ and $S^{*} \cup \{y\}$ is a maximal independent set of $G$. Therefore,  $xy$ is a relating edge.

$2 \implies 1$  There exists an independent set $S$ such that each of $S \cup \{x\}$ and $S \cup \{y\}$ is a maximal independet set of $G$. Since $d(y) \geq 2$, there exists $y' \in N(y) \setminus \{x\}$. The fact that $G$ does not contain cycles of length 5 implies that $y'$ is not adjacent to $S \cap N_{2}(x)$. Therefore, $S_{x} = ( S \cap N_{2}(x) ) \cup \{y'\}$ is a witness that $x$ is not shedding. Similarly,  there exists $x' \in N(x) \setminus \{y\}$, and $S_{y} = ( S \cap N_{2}(y) ) \cup \{x'\}$ is a witness that $y$ is not shedding.
\end{proof}

We show that each of the conditions of Theorem \ref{shedrec6} is necessary. If it does not hold then also the  conclusion of Theorem \ref{shedrec6} does not hold.
\begin{itemize}
\item If $N(x) \cap N(y) \neq \varnothing$ then the conclusion of Theorem \ref{shedrec6} does not necessarily hold. For example, $G$ is a union of a path  $(x_{2},x_{1},x,y,y_{1},y_{2})$ and a triangle $(x,z,y)$. In this case $xy$ is a relating edge, while $x$ and $y$ are shedding.

\item If $y$ is a leaf then the conclusion of Theorem \ref{shedrec6} does not hold. For example, $G$ is a path  $(a,b,x,y)$. In this case $xy$ is a relating edge, while $y$ is not shedding.

\item If $G$ contains a cycle of length 4 then the conclusion of Theorem \ref{shedrec6} does not necessarily hold. For example, if $G$ is a union of a path $(x_{2},x_{1},x,y,y_{1},y_{2})$ and a cycle $(x',x,y,y')$ then $xy$ not relating while $x$ and $y$ are not shedding vertices.

\item If $G$ is a copy of $C_{5}$ and $xy \in E(G)$ then the conclusion of Theorem \ref{shedrec6} does not hold. The edge $xy$ is relating, while $x$ and $y$ are shedding vertices.

\item If $G$ contains a cycle of length 6 then the conclusion of Theorem \ref{shedrec6} does not necessarily hold. For example, if $G$ is a union of cycles $(x''_{1},x'_{1},x,y,y'_{1},y''_{1})$ and  $(x''_{2},x'_{2},x,y,y'_{2},y''_{2})$ then $xy$ is not relating while $x$ and $y$ are not shedding vertices.
\end{itemize}

\section{Hereditary families of graphs}
Let $P$ be a property of graphs. Then $P$ is a {\it hereditary property} if for every graph $G$ which satisfies $P$ all induced subgraphs of $G$ satisfy $P$. Similarly, a {\it hereditary family} of graphs is a family $F$ such that if $G \in F$ then all induced subgraphs of $G$ belong to $F$. For example, claw-free graphs and bipartite graphs are hereditary families, while the set of connected graphs is not a hereditary family.

\begin{theorem}
\label{w2heredwc}
Let $F$ be a hereditary family of graphs, such that there exists an O(f(n))-time algorithm for deciding whether an input graph $G \in F$ is well-covered. Then deciding for an input graph $G \in F$ whether it belongs to $\mathbf{W_2}$ can be done in $O(nf(n))$ time.
\end{theorem}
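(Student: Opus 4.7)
The plan is to apply Theorem \ref{w2shed}, which characterizes membership in $\mathbf{W_2}$ (for graphs without isolated vertices) by two simple conditions: $G \not\approx P_3$, and $G \setminus v$ is well-covered for every $v \in V(G)$. Since $F$ is a hereditary family, each induced subgraph $G \setminus v$ still lies in $F$, so the hypothesized $O(f(n))$-time well-coveredness recognizer can be invoked on it; running it for each of the $n$ vertices gives the claimed $O(n f(n))$ bound.

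First I would dispose of the isolated-vertex case. If $G$ has an isolated vertex $v$, then $v$ lies in every maximal independent set of $G$, so $G$ cannot admit two disjoint maximum independent sets; hence $G \notin \mathbf{W_2}$. This check, together with testing whether $G \approx P_3$, runs in $O(n+m)$ time. Once these exceptional cases are ruled out, $G$ has no isolated vertices and $G \not\approx P_3$, so Theorem \ref{w2shed} applies and reduces the question to whether $G \setminus v$ is well-covered for every $v \in V(G)$.

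To decide this, I would loop over all $n$ vertices $v$ and call the given well-coveredness algorithm on $G \setminus v$. The hereditary assumption guarantees $G \setminus v \in F$, so each such call is legal and costs $O(f(n))$ time, for a total of $O(n f(n))$, absorbing the $O(n+m)$ preprocessing. If some $G \setminus v$ is not well-covered, return that $G \notin \mathbf{W_2}$; otherwise return $G \in \mathbf{W_2}$.

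There is no deep obstacle here, since the theorem is essentially a direct repackaging of Theorem \ref{w2shed} together with heredity. The only care needed is in the preliminary treatment of isolated vertices and of $P_3$ itself: Theorem \ref{w2shed} requires the absence of isolated vertices, and it has $P_3$ as an explicit exception, whereas the vertex-deletion condition alone would mistakenly certify $P_3$ (each one-vertex deletion of $P_3$ yields either $K_2$ or $\overline{K_2}$, both well-covered, yet $P_3 \notin \mathbf{W_2}$).
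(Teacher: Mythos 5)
Your proposal is correct and follows essentially the same route as the paper's proof: rule out isolated vertices and $P_3$, then apply Theorem \ref{w2shed} and invoke the well-coveredness recognizer on each $G \setminus v$, using heredity to justify the calls. Your additional remarks on why the exceptional cases must be handled separately are accurate but not a departure from the paper's argument.
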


\begin{proof}
If $G \approx P_{3}$ or $G$ contains isolated vertices then obviously $G$ is not in $\mathbf{W_2}$.

Suppose $G \not\approx P_{3}$ and $G$ does not contain isolated vertices. By Theorem \ref{w2shed}, $G$ belongs to $\mathbf{W_2}$ if and only if for every vertex $v \in V(G)$ the graph $G \setminus v$ is well-covered. Hence, an algorithm which decides whether a graph belongs to $\mathbf{W_2}$ invokes $n$ times the algorithm for recognizing well-covered graphs.
\end{proof}

\begin{theorem}
\label{w2heredshed}
Let $F$ be a hereditary family of graphs. Assume that there exists an O(f(n))-time algorithm which receives as its input a graph $G \in F$ and a vertex $v \in V(G)$, and decides whether $v \in Shed(G)$. Then deciding for an input well-covered graph $G \in F$ whether it belongs to $\mathbf{W_2}$ can be done in $O(nf(n))$ time.
\end{theorem}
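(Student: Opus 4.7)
The plan is to apply Theorem \ref{w2wcshed}, which asserts that for a well-covered graph with no isolated vertices, $G \in \mathbf{W_2}$ if and only if every vertex of $G$ is shedding. The algorithm is therefore a loop over $V(G)$ that invokes the hypothesized shedding subroutine on $(G,v)$ for each $v$, returning ``yes'' exactly when every call returns ``yes.''

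First, I would dispose of the boundary conditions. In $O(n+m)$ time (which is dominated by $nf(n)$) scan $G$ for isolated vertices: if one is present, that vertex lies in every maximal independent set, so $G$ admits no pair of disjoint maximum independent sets, hence $G \notin \mathbf{W_2}$, and the algorithm returns ``no.'' The $P_3$ exception appearing in Theorem \ref{w2shed} need not be handled separately, since $P_3$ is not well-covered (its maximal independent sets $\{a,c\}$ and $\{b\}$ have different cardinalities), so the input, being well-covered, cannot be $P_3$.

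Otherwise, $G$ is a well-covered graph in $F$ with no isolated vertices, and Theorem \ref{w2wcshed} reduces the question to verifying $Shed(G) = V(G)$. Since $G \in F$, the promised $O(f(n))$-time shedding subroutine may be invoked on $(G,v)$ for each $v \in V(G)$; iterating over all $n$ vertices yields the claimed $O(nf(n))$ bound, and correctness is immediate from Theorem \ref{w2wcshed}. The only subtlety, rather than a real obstacle, is the case-splitting above; note also that hereditariness of $F$ is not strictly used here (in contrast with Theorem \ref{w2heredwc}), because the shedding subroutine is only applied to $G$ itself and no induced subgraphs need to be passed to it.
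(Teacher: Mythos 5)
Your proposal is correct and follows the same route as the paper's proof: dismiss graphs with isolated vertices, then use Theorem \ref{w2wcshed} to reduce membership in $\mathbf{W_2}$ to checking $Shed(G)=V(G)$ with $n$ calls to the $O(f(n))$-time subroutine. Your additional remarks (that $P_3$ is excluded automatically by well-coveredness, and that hereditariness of $F$ is not actually needed here) are accurate but do not change the argument.
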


\begin{proof}
If $G$ contains isolated vertices then obviously $G$ is not in $\mathbf{W_2}$. Suppose $G$ does not contain isolated vertices. By Theorem \ref{w2wcshed}, $G$ belongs to $\mathbf{W_2}$ if and only if $Shed(G) = V(G)$. Hence, an algorithm which decides whether $G$ belongs to $\mathbf{W_2}$ invokes $n$ times the algorithm for recognizing shedding vertices.
\end{proof}

\section{Claw-free graphs}
This section considers the family of claw-free graphs. It supplies polynomial algorithms for recognizing shedding vertices and $\mathbf{W_2}$ graphs.

\begin{theorem}
\label{shedclaw}
There exists an $O(n^3)$ time algorithm which receives as its input a claw-free graph, $G$, and a vertex $v \in V(G)$, and decides whether $v$ is shedding.
\end{theorem}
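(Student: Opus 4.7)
I would start by using the equivalent formulation of the shedding condition: $v$ is shedding iff there is no independent set $S \subseteq V(G) \setminus N[v]$ with $N(v) \subseteq N(S)$. The algorithm's task is therefore to decide the existence of such a dominating independent set $S$. Two structural consequences of the claw-free hypothesis make this decision tractable. First, $\alpha(G[N(v)]) \leq 2$: otherwise, three pairwise non-adjacent neighbors of $v$ together with $v$ would induce a claw. Consequently $N(v)$ decomposes as a union of at most two cliques $K_1, K_2$, computable in $O(n^2)$ time.

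The second structural fact is a rigidity property: whenever $S \subseteq V(G) \setminus N[v]$ is independent and dominates $N(v)$, each $k \in N(v)$ has \emph{exactly one} neighbor in $S$. Indeed, if distinct $s_1, s_2 \in S$ were both adjacent to $k$, then $\{v, s_1, s_2\}$ would be three pairwise non-adjacent neighbors of $k$ (pairwise because $v \not\sim s_1, s_2$ as $s_1, s_2 \in V(G) \setminus N[v]$, and $s_1 \not\sim s_2$ by independence of $S$), giving a claw centered at $k$. Therefore the ``coverage sets'' $C(s) := N(s) \cap N(v)$ partition $N(v)$. My algorithm would exploit this rigidity: compute the candidate list $T(k) := N(k) \setminus N[v]$ for each $k \in N(v)$, and search for a choice $\sigma(k) \in T(k)$ such that $\{\sigma(k) : k \in N(v)\}$ is independent in $G$ -- this is equivalent to the existence of $S$.

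The main obstacle will be designing the transversal search to run in $O(n^3)$. In general, the independent-transversal problem is NP-hard, but the present setting carries strong extra structure: each $T(k)$ is itself a union of at most two cliques (since $T(k) \subseteq N(k)$ in the claw-free graph $G$), $N(v) = K_1 \cup K_2$, and the host $G[V(G) \setminus N[v]]$ is claw-free. I expect this structure to reduce the search to a matching problem in an auxiliary bipartite graph, solvable in $O(n^3)$ time by an augmenting-path argument in the style of the Minty/Sbihi algorithm for maximum independent sets in claw-free graphs. The delicate points will be (i) proving correctness, i.e., that any obstruction produced by the matching procedure certifies that $v$ is shedding, and (ii) bounding the number of augmentation steps so that the overall running time is $O(n^3)$ rather than $O(n^4)$.
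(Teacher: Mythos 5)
Your structural analysis is correct and in fact contains the paper's key lemma: the observation that no two vertices of an independent set $S \subseteq V(G)\setminus N[v]$ can share a neighbor in $N(v)$ (else a claw), so the coverage sets $C(s)=N(s)\cap N(v)$ are pairwise disjoint. But the algorithmic core of your proposal is a gap, not a proof. You reduce the question to finding an independent transversal of the lists $T(k)$, concede that this problem is NP-hard in general, and then only \emph{expect} that the extra structure reduces it to a bipartite matching solvable by augmenting paths; neither the reduction nor its correctness nor the $O(n^3)$ bound is established. Moreover, one of the structural facts you lean on is false: $\alpha(G[N(v)])\le 2$ does not imply that $N(v)$ is a union of two cliques (take $G[N(v)]\approx C_5$, whose complement is triangle-free but not bipartite); that decomposition is the defining property of quasi-line graphs, a proper subclass of claw-free graphs. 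So the transversal-search machinery you sketch rests on an unavailable decomposition and an unproved algorithmic claim.

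The missing idea is to use your disjointness observation not to set up a transversal problem but to \emph{linearize} domination into a weight function. Put $w(x)=|N(x)\cap N(v)|$ for $x\in N_2(v)$. Because the coverage sets of an independent set $S\subseteq N_2(v)$ are pairwise disjoint, $w(S)=\sum_{s\in S}|N(s)\cap N(v)|=|N(S)\cap N(v)|\le |N(v)|$, with equality if and only if $S$ dominates $N(v)$. Hence $v$ fails to be shedding if and only if the maximum weight of an independent set in the claw-free graph $G[N_2(v)]$ equals $|N(v)|$, and this is decided by a single black-box call to the $O(n^3)$ maximum-weight independent set algorithm for claw-free graphs of Faenza, Oriolo and Stauffer (cited in the paper); no transversal or matching argument is needed. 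This is exactly the paper's proof, and it is the step your plan is missing.
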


\begin{proof}
Define a weight function $w:N_{2}(v) \longrightarrow Z$ by $w(x) = |N(x) \cap N(v)|$, for every $x \in N_{2}(v)$. Let $S$ be an independent set of $N_{2}(v)$. We prove that distinct vertices of $S$ dominate distinct vertices of $N(v)$. Suppose on the contrary that $s_{1} \in S$ and $s_{2} \in S$ have a common neighbor $v' \in N(v)$. Then $G[\{v',v,s_{1},s_{2}\}]$ is a claw, which is a contradiction. Hence, $w(S) = \sum_{s \in S} w(s) = \sum_{s \in S} |N(s) \cap N(v)| = |N(S) \cap N(v)|$. If follows that the weight of every independent set of $N_{2}(v)$ can not exceed $|N(v)|$.

Invoke an algorithm for finding a maximum weight independent set in a claw-free graph. First such an algorithm is due to Minty \cite{minty:claw}, while the best known one with the complexity $O(n^3)$ may be found in \cite{fea:claw}. The algorithm outputs a set $S^{*} \subseteq N_{2}(v)$. If $w(S^{*}) = |N(v)|$ then $S^{*}$ is an independent set of $N_{2}(v)$ which dominates $N(v)$, and therefore $v$ is not shedding. However, if $w(S^{*}) < |N(v)|$ then there does not exist an independent set of $N_{2}(v)$ which dominates $N(v)$, and $v$ is shedding. Deciding whether $v$ is shedding can be done in $O(n^3)$ time.
\end{proof}

\begin{corollary}
\label{w2wcclaw}
There exists an $O(n^4)$ time algorithm which receives a well-covered claw-free graph, and decides whether it belongs to $\mathbf{W_2}$.
\end{corollary}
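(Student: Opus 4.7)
The plan is to observe that this corollary is essentially an immediate combination of Theorem \ref{w2heredshed} with the shedding-recognition algorithm of Theorem \ref{shedclaw}. First I would verify that the family of claw-free graphs is hereditary: if $G$ has no induced $K_{1,3}$, then neither does any induced subgraph of $G$, since an induced claw in $H \subseteq G$ would also be an induced claw in $G$. Hence claw-free graphs form a hereditary family $F$ in the sense required by Theorem \ref{w2heredshed}.

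Next I would invoke Theorem \ref{shedclaw}, which gives an $O(n^3)$-time algorithm that, given a claw-free graph $G$ and a vertex $v \in V(G)$, decides whether $v \in Shed(G)$. Thus the hypothesis of Theorem \ref{w2heredshed} is met with $f(n) = n^3$.

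Finally, applying Theorem \ref{w2heredshed} to the hereditary family of claw-free graphs with this shedding-recognition routine gives an algorithm that decides, for an input well-covered claw-free graph $G$, whether $G \in \mathbf{W_2}$, and runs in $O(n \cdot f(n)) = O(n \cdot n^3) = O(n^4)$ time. There is no real obstacle here: the corollary is a plug-in of Theorem \ref{shedclaw} into the generic reduction of Theorem \ref{w2heredshed}, whose proof already handles the isolated-vertex corner case and reduces $\mathbf{W_2}$ testing on a well-covered graph to checking $Shed(G) = V(G)$ via $n$ calls to the shedding oracle.
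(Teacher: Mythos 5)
Your proof is correct and matches the paper's own argument, which likewise derives the corollary immediately from Theorem \ref{shedclaw} and Theorem \ref{w2heredshed}. Your explicit check that claw-free graphs form a hereditary family is a detail the paper leaves implicit, but the route is identical.
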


\begin{proof}
Follows immediately from Theorem \ref{shedclaw} and Theorem \ref{w2heredshed}.
\end{proof}

Deciding whether an input claw-free graph is well-covered can be done in $O(m^{\frac{3}{2}}n^{3})$ time \cite{lt:equimatchable}. Therefore, Theorem \ref{w2heredwc} implies that recognizing $\mathbf{W_2}$ claw-free graphs can be completed in $O(m^{\frac{3}{2}}n^{4})$ time. However, Theorem \ref{w2claw} supplies a more efficient algorithm which solves this problem.

\begin{theorem}
\label{w2claw}
There exists an $O(m^{\frac{3}{2}}n^{3})$ time algorithm for recognizing $\mathbf{W_2}$ claw-free graphs.
\end{theorem}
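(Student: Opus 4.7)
The plan is to short-circuit the two-phase approach so that the expensive well-covered test of \cite{lt:equimatchable} is invoked only once, not $n$ times as in Theorem \ref{w2heredwc}. Concretely, I would first handle the trivial disqualifications (isolated vertices, $G \approx P_3$) in linear time, then call the $O(m^{\frac{3}{2}}n^{3})$ algorithm of \cite{lt:equimatchable} a single time to test whether the input claw-free graph $G$ is well-covered. If $G$ is not well-covered then, since $\mathbf{W_2} \subseteq \mathbf{W_1}$, the algorithm rejects.

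If $G$ passes the well-covered test, I would then apply Corollary \ref{w2wcclaw} directly: run the $O(n^3)$ shedding-vertex test of Theorem \ref{shedclaw} once for each $v \in V(G)$, and accept iff all $n$ calls return "shedding." Correctness follows from Theorem \ref{w2wcshed}, which tells us that a well-covered graph without isolated vertices lies in $\mathbf{W_2}$ exactly when $Shed(G) = V(G)$.

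For the running time, the well-covered phase costs $O(m^{\frac{3}{2}}n^{3})$ and the shedding phase costs $O(n \cdot n^{3}) = O(n^{4})$. Since the isolated-vertex check has already been passed, we have $m \geq n/2$, hence $m^{\frac{3}{2}} \geq (n/2)^{3/2}$, which yields $n^{4} = O(m^{\frac{3}{2}}n^{3})$. Adding the two phases therefore stays within $O(m^{\frac{3}{2}}n^{3})$, saving the factor of $n$ that a direct invocation of Theorem \ref{w2heredwc} would have cost.

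There is no real obstacle here, only a bookkeeping point: one must be careful to justify that $n^{4}$ is absorbed by $m^{\frac{3}{2}}n^{3}$ under the standing assumption of no isolated vertices, and to cite Theorem \ref{w2wcshed} rather than Theorem \ref{w2shed} (so that the conditional well-covered test, not the $n$ deletion-well-covered tests, suffices).
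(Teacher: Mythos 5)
Your proposal is correct and follows essentially the same route as the paper: one invocation of the $O(m^{\frac{3}{2}}n^{3})$ well-covered test of \cite{lt:equimatchable}, followed (on a positive answer) by the $O(n^{4})$ procedure of Corollary \ref{w2wcclaw}, with correctness resting on Theorem \ref{w2wcshed}. Your extra remarks (handling isolated vertices and $P_3$ up front, and justifying that $n^{4}=O(m^{\frac{3}{2}}n^{3})$ once $m\geq n/2$) only make explicit what the paper leaves implicit in the bound $O(m^{\frac{3}{2}}n^{3}+n^{4})=O(m^{\frac{3}{2}}n^{3})$.
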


\begin{proof}
Invoke the algorithm of \cite{lt:equimatchable} for deciding whether the input graph $G$ is well-covered. If $G$ is not well-covered then $G \not\in \mathbf{W_2}$. If $G$ is well-covered, invoke the algorithm of Corollary \ref{w2wcclaw} to decide whether $G$ belongs to $\mathbf{W_2}$.

The complexity of recognizing $\mathbf{W_2}$ claw-free graphs is $O(m^{\frac{3}{2}}n^{3} + n^4) = O(m^{\frac{3}{2}}n^{3})$.
\end{proof}

\section{Graphs without cycles of length 5}
\subsection{Shedding vertices}
\begin{theorem}
\cite{lt:wwc456}
\label{dc5} 
Let $G$ be a graph without cycles of length 5, and $v \in V(G)$. Then the following holds.
\begin{enumerate}
\item Every maximal independent set of $N_{2}(v)$ dominates $N(v) \cap N(N_{2}(v))$.
\item If $N_{2}(v)$ dominates $N(v)$ then every maximal independent set of $N_{2}(v)$ dominates $N(v)$. 
\end{enumerate}
\end{theorem}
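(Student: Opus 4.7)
The plan is to prove the first assertion directly by a case analysis on maximality, and then derive the second assertion as an immediate consequence. Fix a maximal independent set $S$ of $N_{2}(v)$ and a vertex $u \in N(v) \cap N(N_{2}(v))$; the goal is to show that $u$ has a neighbor in $S$, arguing by contradiction.

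By the definition of $u$, there exists $w \in N_{2}(v) \cap N(u)$. If $w \in S$ then $u$ is dominated by $S$ and we are done, so assume $w \notin S$. Maximality of $S$ in $N_{2}(v)$ gives some $s \in S$ with $sw \in E(G)$. Since $s \in N_{2}(v)$, there exists $u' \in N(v) \cap N(s)$. Suppose, for contradiction, that no vertex of $S$ is adjacent to $u$. Then $u \neq u'$ (otherwise $u$ would be adjacent to $s \in S$), and the five vertices $v, u, w, s, u'$ are pairwise distinct because they lie in the pairwise disjoint sets $\{v\}$, $N(v)$ and $N_{2}(v)$, together with the fact that $w \neq s$ (they are adjacent) and $u \neq u'$. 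The edges $vu, uw, ws, su', u'v$ then form a cycle of length $5$ in $G$, contradicting the hypothesis. Hence $u$ is dominated by $S$, proving the first assertion.

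For the second assertion, assume $N_{2}(v)$ dominates $N(v)$, so that every vertex of $N(v)$ has a neighbor in $N_{2}(v)$; equivalently, $N(v) \subseteq N(N_{2}(v))$, and therefore $N(v) \cap N(N_{2}(v)) = N(v)$. Applying the first assertion, every maximal independent set of $N_{2}(v)$ dominates this intersection, which is all of $N(v)$.

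The only delicate point is guaranteeing that the five vertices of the putative $C_{5}$ are genuinely distinct and that all five edges are actually present; the most subtle step is the inequality $u \neq u'$, which is exactly where the contradiction hypothesis (that $S$ has no neighbor of $u$) is used. Once distinctness is secured, the structural conclusion is forced by the forbidden-cycle assumption, and the second assertion requires no further work.
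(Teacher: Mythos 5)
Your argument is correct. Note that the paper itself gives no proof of this statement---it is imported from \cite{lt:wwc456} as a cited result---so there is nothing to compare against line by line; judged on its own, your proof is sound. The key step, assembling the $5$-cycle $(v,u,w,s,u')$, is handled carefully: the only nontrivial distinctness is $u \neq u'$, which you correctly extract from the contradiction hypothesis that no vertex of $S$ is adjacent to $u$, and the remaining distinctness follows from the disjointness of $\{v\}$, $N(v)$, and $N_{2}(v)$ together with $w \neq s$. The deduction of the second assertion from the first via $N(v) \subseteq N(N_{2}(v))$ is immediate and correct.
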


\begin{corollary}
\label{c5shed2}
Let $G$ be a graph without cycles of length 5. Let $v \in V(G)$. Then $v$ is shedding if and only if  $N_{2}(v)$ does not dominate $N(v)$.
\end{corollary}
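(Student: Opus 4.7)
The plan is to argue the two directions separately, using the standard reformulation of shedding from the introduction: $v$ fails to be shedding precisely when there exists an independent set $S \subseteq V(G) \setminus N[v]$ that dominates $N(v)$. Since vertices in $V(G) \setminus N[v]$ at distance greater than $2$ from $v$ have no neighbor in $N(v)$ and therefore contribute nothing to dominating $N(v)$, such an $S$ can be replaced by its intersection with $N_{2}(v)$ without loss of the domination property. Hence the condition becomes: $v$ is not shedding if and only if some \emph{independent} subset of $N_{2}(v)$ dominates $N(v)$.

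For the easy direction, if $N_{2}(v)$ does not dominate $N(v)$, then no subset of $N_{2}(v)$ (independent or not) can dominate $N(v)$, so by the reformulation above $v$ is shedding.

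For the converse, suppose $N_{2}(v)$ dominates $N(v)$. Here is where I invoke the hypothesis that $G$ has no cycle of length $5$ through Theorem \ref{dc5}(2): that theorem guarantees that, under the assumption that $N_{2}(v)$ dominates $N(v)$, every maximal independent set of $N_{2}(v)$ already dominates $N(v)$. Picking any maximal independent set $S$ of the induced subgraph on $N_{2}(v)$ (which exists by a greedy argument and is nonempty since $N_{2}(v)$ itself is nonempty), we obtain an independent set $S \subseteq V(G) \setminus N[v]$ that dominates $N(v)$. By the reformulation, $v$ is not shedding, completing the contrapositive.

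There is no real obstacle here; the corollary is a direct unpacking of the shedding condition combined with Theorem \ref{dc5}(2). The only subtlety worth stating cleanly in the write-up is why we are allowed to restrict a potential witness to $N_{2}(v)$, which follows from the trivial observation that vertices at distance at least $3$ from $v$ cannot dominate any neighbor of $v$.
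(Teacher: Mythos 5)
Your proof is correct and takes exactly the route the paper intends: Corollary \ref{c5shed2} is presented there as an immediate consequence of Theorem \ref{dc5}(2), and your argument --- reducing any dominating witness to an independent subset of $N_{2}(v)$ via the standard reformulation of shedding, then applying that theorem to a maximal independent set of $N_{2}(v)$ --- is precisely that derivation. (The parenthetical claim that $N_{2}(v)$ is nonempty is unnecessary and fails in the degenerate case $N(v)=\varnothing$, but the empty set serves as the maximal independent set there, so nothing breaks.)
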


\begin{corollary}
\label{c5shedalg} The following problem can be solved in $O(n+m)$ time. \newline
Input: A graph $G$ without cycles of length 5, and a vertex $v \in V(G)$. \newline
Question: Is $v$ shedding ?
\end{corollary}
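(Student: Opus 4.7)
The plan is to reduce the question directly to the domination condition provided by Corollary \ref{c5shed2}: the vertex $v$ is shedding if and only if $N_{2}(v)$ does not dominate $N(v)$. So the algorithmic task is simply to test, in linear time, whether every vertex of $N(v)$ has a neighbor in $N_{2}(v)$.

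The observation that makes this linear is the following. A vertex $u \in N(v)$ has a neighbor in $N_{2}(v)$ if and only if $u$ has a neighbor outside $\{v\} \cup N(v)$, since any vertex adjacent to $u \in N(v)$ lies in $\{v\} \cup N(v) \cup N_{2}(v)$. Thus the algorithm is: mark the vertices of $\{v\} \cup N(v)$ with a single pass through the adjacency list of $v$, then for each $u \in N(v)$ scan its adjacency list and halt the scan for $u$ as soon as an unmarked vertex is found. If some $u \in N(v)$ yields no unmarked neighbor, report that $v$ is shedding; otherwise every vertex of $N(v)$ is dominated by some vertex of $N_{2}(v)$, and we report that $v$ is not shedding.

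The cost is $O(n)$ for initialization of the marks plus $O(\sum_{u \in N(v)} d(u)) \leq O(m)$ for the scans, giving total time $O(n+m)$. There is no real obstacle: correctness is immediate from Corollary \ref{c5shed2} combined with the observation above, and the complexity is just a sum of degrees.
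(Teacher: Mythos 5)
Your proposal is correct and follows the same route as the paper: both reduce the question to testing whether $N_{2}(v)$ dominates $N(v)$ via Corollary \ref{c5shed2}, and the paper simply asserts that this test runs in $O(n+m)$ time where you spell out the marking procedure explicitly. No issues.
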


\begin{proof}
By Corollary \ref{c5shed2}, $v$ is shedding if and only if $N_{2}(v)$ does not dominate $N(v)$. Since one can decide in $O(n+m)$ time whether $N_{2}(v)$ dominates $N(v)$, it is possible to decide in the same complexity whether $v$ is a shedding vertex.
\end{proof}

\begin{corollary}
\label{w2wcc5}
The following problem can be solved in $O(n(n+m))$ time. \newline
Input: A well-covered graph $G$ without cycles of length 5. \newline
Question: Is $G$ in the class $\mathbf{W_2}$ ?
\end{corollary}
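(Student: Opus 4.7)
The plan is to apply Theorem \ref{w2heredshed} to the hereditary family of graphs without cycles of length $5$. First I would verify that this family is indeed hereditary: if $G$ contains no $C_5$, then any induced subgraph $H$ of $G$ also contains no $C_5$, since an induced $C_5$ in $H$ would also be an induced $C_5$ in $G$. Thus the hypothesis of Theorem \ref{w2heredshed} is satisfied.

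Next, by Corollary \ref{c5shedalg}, there is an $O(n+m)$-time algorithm which, given a $C_5$-free graph $G$ and a vertex $v \in V(G)$, decides whether $v$ is shedding. So we may take $f(n) = n + m$ in the statement of Theorem \ref{w2heredshed}.

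Applying Theorem \ref{w2heredshed} directly with this choice of $f$, we obtain an algorithm that decides in $O(n f(n)) = O(n(n+m))$ time whether a given well-covered $C_5$-free graph belongs to $\mathbf{W_2}$. There is essentially no obstacle here; the corollary is just the composition of the shedding-vertex test from Corollary \ref{c5shedalg} with the general reduction provided by Theorem \ref{w2heredshed}, together with the trivial observation that $C_5$-freeness is a hereditary property.
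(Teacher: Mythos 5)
Your proof is correct and follows exactly the same route as the paper: the paper's proof of this corollary is a one-line citation of Theorem \ref{w2heredshed} together with Corollary \ref{c5shedalg}, and you have simply spelled out the (trivial but valid) verification that $C_5$-free graphs form a hereditary family and that one may take $f(n)=n+m$.
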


\begin{proof}
Follows immediately from Theorem \ref{w2heredshed} and Corollary \ref{c5shedalg}.
\end{proof}

\subsection{Graphs without cycles of lengths 3 and 5}
\begin{theorem}
\label{w2vc3c5}
Let $G \in \mathbf{W_2}$ such that $G \not\approx K_{2}$. Let $v \in V(G)$. Then at least one of the following holds.
\begin{enumerate}
\item $v$ is on a triangle $(v,v_{1},v_{2})$ such that $\{v_{1},v_{2}\}$ is not dominated by $N_{2}(v)$.
\item $v$ is on a $C_{5}$.
\end{enumerate}
\end{theorem}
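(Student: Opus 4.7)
I would approach this via the shedding characterization of Theorem~\ref{w2wcshed}. Since $G \in \mathbf{W_2}$ has no isolated vertex (an isolated vertex $z$, paired with any other vertex, would yield two disjoint independent sets not extendable to disjoint maximum independent sets, because every maximum independent set must contain $z$), Theorem~\ref{w2wcshed} gives $Shed(G) = V(G)$. In particular, $v$ is shedding, so no independent set $S \subseteq V(G) \setminus N[v]$ dominates $N(v)$. I would assume that both (1) and (2) fail and manufacture such an $S$ for a contradiction.

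Unpacking the hypotheses: the failure of (1) means that for every triangle $(v,v_1,v_2)$ through $v$, both $v_1$ and $v_2$ have a neighbor in $N_2(v)$; the failure of (2) means $v$ lies on no $5$-cycle. For each $u \in N(v)$ I would choose $f(u) \in N(u) \cap N_2(v)$: if $u$ has a neighbor $u' \in N(v)$, then $(v,u,u')$ is a triangle and the failure of (1) supplies $f(u)$; otherwise $N(u) \setminus \{v\} \subseteq N_2(v)$ and $f(u)$ exists so long as $u$ is not a leaf. Set $S = \{f(u) : u \in N(v)\}$. Then $S \subseteq N_2(v)$ and $S$ dominates $N(v)$ by construction.

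The crucial step is to check independence of $S$. Given distinct adjacent $x = f(u_1)$ and $y = f(u_2)$ in $S$, necessarily $u_1 \neq u_2$, and the vertices $v, u_1, x, y, u_2$ are pairwise distinct because $x, y \in N_2(v)$ while $u_1, u_2 \in N(v)$. The edges $vu_1$, $u_1x$, $xy$, $yu_2$, $u_2v$ then form a $5$-cycle through $v$, contradicting the failure of (2). Hence $S$ is an independent set of $V(G) \setminus N[v]$ dominating $N(v)$, contradicting $v \in Shed(G)$.

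The main obstacle I foresee is ruling out leaves in $N(v)$, since a leaf $u \in N(v)$ blocks the construction of $f(u)$. My plan is to apply the shedding property to $u$ itself: if the sole neighbor $v$ of a leaf $u$ had another neighbor $w$, then $\{w\}$ would be an independent subset of $V(G) \setminus N[u]$ dominating $N(u) = \{v\}$, contradicting $u \in Shed(G)$. Consequently $\{u,v\}$ is a $K_2$-component of $G$, and this residual case reduces to the excluded configuration $G \approx K_2$ (or is handled by a component-wise reduction).
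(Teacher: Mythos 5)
Your proposal is correct and takes essentially the same route as the paper: both arguments rest on $Shed(G)=V(G)$ from Theorem~\ref{w2wcshed}, choose one $N_{2}(v)$-neighbour for each vertex of $N(v)$ (your triangle/leaf dichotomy is exactly the paper's Case~1, which handles a vertex of $N(v)$ with no neighbour in $N_{2}(v)$), and convert the forced non-independence of that dominating set into a $C_{5}$ through $v$. The only differences are presentational: the paper argues directly in two cases rather than by contradiction, and it dismisses leaves with the same shedding observation you make (both treatments implicitly assume $G$ is connected, since otherwise a $K_{2}$-component is genuinely possible).
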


\begin{proof}
The fact that $G \in \mathbf{W_2}$ and $G \not\approx K_{2}$ implies that $G$ does not contain leaves. Two cases are considered.

{\sc case 1: $N_{2}(v)$ does not dominate $N(v)$} There exists $v_{1} \in N(v)$ which is not dominated by $N_{2}(v)$. Since $v_{1}$ is not a leaf, there exists $v_{2} \in N(v) \cap N(v_{1})$. Condition 1 holds.

{\sc case 2: $N_{2}(v)$ dominates $N(v)$.} Let $N(v) = \{v_{1},\ldots,v_{k}\}$. For every $1 \leq i \leq k$ let $v'_{i} \in N_{2}(v) \cap N(v_{i})$. Since $v$ is a shedding vertex, $\{v'_{1},\ldots,v'_{k}\}$ is not independent. Hence, $(v'_{i},v'_{j}) \in E(G)$ for some $1 \leq i < j \leq k$. Therefore, $(v,v_{i},v'_{i},v'_{j},v_{j})$ is a $C_{5}$ which contains $v$. Condition 2 holds.
\end{proof}

\begin{corollary}
\label{w2bipartite}
Let $G$ be a graph without cycles of lengths 3 and 5. Then $G \in \mathbf{W_2}$ if and only if $G \approx K_{2}$.
\end{corollary}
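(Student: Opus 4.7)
The plan is to reduce Corollary \ref{w2bipartite} to a direct application of Theorem \ref{w2vc3c5}. The reverse implication is trivial: $K_{2}$ is clearly in $\mathbf{W_{2}}$ (its only maximum independent sets are the two singletons, which are pairwise disjoint), so I would dispose of that direction in one sentence.

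For the forward direction, I would argue by contraposition. Suppose $G$ is a graph with no cycles of length $3$ or $5$ and $G \in \mathbf{W_{2}}$; I want to conclude $G \approx K_{2}$. Assume toward a contradiction that $G \not\approx K_{2}$. Then Theorem \ref{w2vc3c5} applies to every vertex $v \in V(G)$, yielding that $v$ either lies on a triangle or on a $C_{5}$. But by hypothesis $G$ contains no $C_{3}$ and no $C_{5}$, so neither alternative can hold for any $v$. This contradiction forces $G \approx K_{2}$.

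There is essentially no obstacle here, since all the substantive work is done in Theorem \ref{w2vc3c5}; the corollary is just the observation that its two alternatives are both ruled out under the girth hypothesis. The only small care needed is to handle the edge case: verifying that $K_{2}$ itself does satisfy the hypothesis of having no cycles of length $3$ or $5$ (vacuously true) and that it is indeed in $\mathbf{W_{2}}$, so that the ``if and only if'' is tight.
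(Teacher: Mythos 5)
Your proof is correct and follows exactly the route the paper intends: the corollary is stated without proof as an immediate consequence of Theorem \ref{w2vc3c5}, and your argument (the two alternatives of that theorem are both excluded by the girth hypothesis, so $G \approx K_{2}$ is forced, while $K_{2} \in \mathbf{W_2}$ gives the converse) is precisely that deduction.
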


\subsection{Graphs without cycles of lengths 4 and 5}
In \cite{fhn:wc45}, the following definitions are used. A graph $G$ belongs to the family $F$ if there exists $\{x_{1},x_{2},\ldots,x_{k}\} \subseteq V(G)$, where for each $1 \leq i \leq k$ it holds that $x_{i}$ is simplicial, $|N[x_{i}]| \leq 3$, and $\{ N[x_{i}] : i = 1, 2, \ldots, k\}$ is a partition of $V(G)$.

\begin{theorem}
\label{wc45}
\cite{fhn:wc45}
Let $G$ be a connected well-covered graph containing neither $C_{4}$ nor $C_{5}$ as a subgraph. Then
\begin{enumerate}
\item $G$ contains a shedding vertex or $G \approx K_{1}$ if and only if $G \in F$.
\item Otherwise, $G$ is isomorphic to $C_{7}$ or $T_{10}$. (See Figure \ref{T10}.)
\end{enumerate}
\end{theorem}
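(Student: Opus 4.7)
The plan is to establish a unified structural trichotomy that subsumes both parts of the theorem: every connected well-covered graph $G$ with no $C_4$ or $C_5$ satisfies exactly one of (i) $G \approx K_1$, (ii) $G \approx C_7$ or $G \approx T_{10}$, or (iii) $G \in F$ and $G$ has a shedding vertex. The argument is by induction on $|V(G)|$, combining a simplicial-decomposition lemma with Corollary \ref{c5shed2}.

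First I would dispose of the exceptional cases. For $G \approx K_1$, the partition $\{\{v\}\}$ shows $G \in F$. For $G \approx C_7$: every vertex has two non-adjacent neighbors, so no $N[v]$ is a clique and $C_7$ has no simplicial vertex, hence $C_7 \notin F$; moreover, for every $v$, $N_2(v)$ has exactly two vertices whose combined neighborhoods cover $N(v)$, so $N_2(v)$ dominates $N(v)$ and Corollary \ref{c5shed2} forbids any shedding vertex. A direct inspection of $T_{10}$ from its figure yields the same two conclusions.

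The heart of the proof is a decomposition lemma: if $G$ is connected and well-covered with no $C_4$ or $C_5$, and $G \not\approx K_1, C_7, T_{10}$, then $G$ contains a simplicial vertex $x$ with $|N[x]| \leq 3$ such that $G \setminus N[x]$ inherits the hypotheses and has $\alpha$ one smaller. For the existence of $x$, I would pick $v$ of maximum degree and analyse $N(v) \cup N_2(v)$: the absence of $C_4$ forces any two common neighbors of a vertex $u \in N_2(v)$ and of $v$ to be pairwise adjacent, so every such $u$ with $N(u) \subseteq N[v]$ is automatically simplicial, and a well-covered comparison of maximal independent sets of $V(G) \setminus N[x]$ forces $|N[x]| \leq 3$. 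The configurations in which no such $u$ exists reduce, via the no-$C_5$ condition, exactly to $C_7$ and $T_{10}$. Preservation of well-coveredness after removing $N[x]$ follows because every maximal independent set of $G \setminus N[x]$ extends uniquely by $x$ to a maximal independent set of $G$, and the short-cycle bans are trivially inherited.

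Inductively classifying each component of $G \setminus N[x]$ then yields a partition witnessing $G \in F$: the short-cycle conditions forbid a $C_7$ or $T_{10}$ component from being attached to $N[x]$, so every component falls in class (i) or (iii), and concatenating its partition with $\{N[x]\}$ gives the partition for $G$. Corollary \ref{c5shed2} then verifies that either $x$ itself or an inductively supplied vertex is shedding in $G$. The converse direction of Part 1, that the presence of a shedding vertex forces $G \in F$, is obtained from Corollary \ref{c5shed2}: a shedding $v$ yields $u \in N(v)$ with $N(u) \subseteq N[v]$, the no-$C_4$ condition makes $N[u]$ a clique, and well-coveredness bounds $|N[u]|$ by $3$, feeding into the decomposition argument. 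The main obstacle is the decomposition lemma itself, and in particular singling out $C_7$ and $T_{10}$ as the only obstructions: this requires an intricate local case analysis that balances well-coveredness (controlling sizes of local maximal independent sets) against the forbidden short cycles (controlling local adjacencies), with every other step reducing to routine induction.
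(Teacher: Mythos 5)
The paper does not actually prove Theorem \ref{wc45}: it is quoted, with citation, from \cite{fhn:wc45}, so the only benchmark is the original Finbow--Hartnell--Nowakowski argument, whose overall strategy your outline correctly mirrors (extract a simplicial vertex $x$ with $|N[x]| \leq 3$, show $G \setminus N[x]$ inherits the hypotheses with $\alpha$ one smaller, induct, and isolate $C_7$ and $T_{10}$ as the obstructions). As a proof, however, your proposal has a genuine gap: the one step that constitutes the theorem's actual content --- showing that when no suitable simplicial vertex exists the graph must be exactly $C_7$ or $T_{10}$ --- is not carried out. You explicitly defer it to ``an intricate local case analysis,'' but in the source this analysis is the bulk of a long paper (a chain of lemmas controlling how pendant edges, triangles, and exceptional cycles can meet), and nothing in your sketch substitutes for it. Deferring precisely the hard step means the proposal is a plan, not a proof.

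Beyond that, one concrete step fails as stated: the claim that ``the short-cycle conditions forbid a $C_7$ or $T_{10}$ component from being attached to $N[x]$'' is false. A neighbor $y$ of the simplicial vertex $x$ can be joined to a single vertex, or to two consecutive vertices, of a $C_7$ component of $G \setminus N[x]$ without creating any $C_4$ or $C_5$ subgraph; what must exclude such configurations is the well-coveredness of $G$, via a comparison of maximal independent sets that use or avoid the attachment vertices, and that argument is absent. The same omission infects the converse direction of Part 1: from a shedding vertex $v$ you correctly get (via Corollary \ref{c5shed2} and the no-$C_4$ condition) a simplicial $u$ with $N(u) \subseteq N[v]$, but to conclude $G \in F$ you must still show that the components of $G \setminus N[u]$ cannot be exceptional, which is again the missing analysis. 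Two smaller slips: $|N[u]| \leq 3$ follows directly from the ban on $C_4$ as a subgraph (since $K_4$ contains $C_4$), not from well-coveredness; and the existence, for a maximum-degree $v$, of some $u \in N_2(v)$ with $N(u) \subseteq N[v]$ --- the launching point of your decomposition lemma --- is asserted without justification, and it is exactly where the deferred case analysis would have to begin.
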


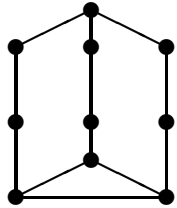
\begin{figure}[h]
\setlength{\unitlength}{1.0cm} \begin{picture}(3,3)\thicklines
\multiput(6,0.5)(0,1){3}{\circle*{0.2}}
\multiput(8,0.5)(0,1){3}{\circle*{0.2}}
\multiput(7,1.5)(0,1){2}{\circle*{0.2}}
\put(7,1){\circle*{0.2}}
\put(7,3){\circle*{0.2}}
\put(6,0.5){\line(0,1){2}}
\put(6,0.5){\line(1,0){2}}
\put(6,0.5){\line(2,1){1}}
\put(8,0.5){\line(0,1){2}}
\put(7,1){\line(0,1){2}}
\put(7,1){\line(2,-1){1}}
\put(6,2.5){\line(2,1){1}}
\put(7,3){\line(2,-1){1}}
\end{picture}\caption{The graph $T_{10}$.}%
\label{T10}
\end{figure}

Theorem \ref{w2c45} is the main result of this section.

\begin{theorem}
\label{w2c45}
Let $G$ be a connected graph containing neither $C_{4}$ nor $C_{5}$ as a subgraph. The following conditions are equivalent.
\begin{enumerate}
\item $G \in \mathbf{W_2}$
\item $G \approx K_{2}$ or there exists a partition $\{T_{1}, T_{2}, \ldots, T_{k}\}$ of $V(G)$, such that $G[T_{i}]$ is a triangle with at least 2 simplicial vertices, for each $1 \leq i \leq k$.
\end{enumerate}
\end{theorem}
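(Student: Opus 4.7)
The plan is to exploit the shedding-vertex characterization of $\mathbf{W_2}$ (Theorems \ref{w2shed} and \ref{w2wcshed}) together with the structural classification of connected well-covered $\{C_4,C_5\}$-free graphs in Theorem \ref{wc45}.

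For $(1)\Rightarrow(2)$, I would assume $G\in\mathbf{W_2}$ with $G\not\approx K_2$. Since graphs in $\mathbf{W_2}$ have no isolated vertices and $G$ contains no $C_5$, Theorem \ref{w2vc3c5} puts every vertex on a triangle, so $G$ also has no leaves. Because $G$ is well-covered (as $\mathbf{W_2}\subseteq\mathbf{W_1}$) and every vertex of $G$ is shedding (Theorem \ref{w2wcshed}), a direct check that neither $C_7$ nor $T_{10}$ has a shedding vertex rules them out; hence Theorem \ref{wc45} forces $G\in F$. Thus $V(G)$ partitions as $\{N[x_1],\ldots,N[x_k]\}$ with each $x_i$ simplicial and $|N[x_i]|\leq 3$. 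The absence of isolated vertices and leaves upgrades this to $|N[x_i]|=3$, so each $T_i:=N[x_i]$ is a triangle. It remains to exhibit a \emph{second} simplicial vertex in each $T_i$; this is the heart of the argument. Writing $T_i=\{x_i,a_i,b_i\}$, I would suppose both $a_i$ and $b_i$ have external neighbors $a'$ and $b'$. The $C_4$-freeness of $G$ forces $a'\neq b'$ (otherwise $x_i,a_i,a',b_i$ induce a $C_4$) and $a'b'\notin E(G)$ (otherwise $a_i,a',b',b_i$ do). Then $\{a',b'\}$ is an independent subset of $N_2(x_i)$ dominating $N(x_i)=\{a_i,b_i\}$, contradicting the fact that $x_i$ is shedding.

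For $(2)\Rightarrow(1)$, the case $G\approx K_2$ is immediate. Otherwise I would verify the two hypotheses of Theorem \ref{w2wcshed}: well-coveredness and $Shed(G)=V(G)$. Well-coveredness follows since each $T_i$ is a clique (so any independent set intersects it in at most one vertex) and an independent set missing $T_i$ can always be extended by $x_i$ (whose neighborhood lies entirely inside $T_i$); hence every maximal independent set contains exactly one vertex per triangle, giving $\alpha(G)=k$. For shedding, the unifying observation is that every vertex $v$ has a simplicial neighbor $u$ lying in its own triangle: if $v=x_i$, take $u$ to be the second simplicial vertex of $T_i$ granted by the hypothesis; if $v\neq x_i$, take $u=x_i$. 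Either way $N(u)\subseteq N[v]$, so $S\cup\{u\}$ is independent for every independent $S\subseteq V(G)\setminus N[v]$, and $v$ is shedding.

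The principal obstacle is the "at least two simplicial vertices per triangle" step in the forward direction; the remaining work is essentially an assembly of the cited results. Its crux is the tight interplay between $C_4$-freeness (which ensures the two external neighbors form an independent pair) and the shedding of $x_i$ (which forbids such a pair from existing).
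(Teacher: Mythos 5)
Your proposal is correct and follows essentially the same route as the paper: both directions rest on Theorem \ref{wc45} together with the shedding characterization of $\mathbf{W_2}$, and your key step for the second simplicial vertex (taking external neighbors of both non-simplicial candidates, using $C_4$/$C_5$-freeness to make them a nonadjacent pair, and contradicting $G \in \mathbf{W_2}$) is structurally identical to the paper's, which merely phrases the final contradiction via two disjoint maximal independent sets extending $\{x_i\}$ and $\{y_i',z_i'\}$ rather than via the shedding of $x_i$. Your converse via Theorem \ref{w2wcshed} is a legitimate (and slightly more detailed) rendering of the paper's terse argument; the only point worth stating explicitly is that $C_4$-freeness forces each simplicial vertex of a triangle $T_i$ to satisfy $N[u]=T_i$, which is what justifies both the extension step and $N(u)\subseteq N[v]$.
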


\begin{proof}

$1 \implies 2$ Suppose that $G \in \mathbf{W_2}$. By Theorem \ref{w2wcshed}, $V(G) = Shed(G)$. Therefore, $G$ is isomorphic neither to $K_{1}$ nor to $C_{7}$ nor to $T_{10}$. By Theorem \ref{wc45}, $G \in F$. Hence, there exists $\{x_{1},x_{2},\ldots,x_{k}\} \subseteq V(G)$, where for each $1 \leq i \leq k$ it holds that $x_{i}$ is simplicial, $|N[x_{i}]| \leq 3$, and $\{ N[x_{i}] : i = 1, 2, \ldots, k\}$ is a partition of $V(G)$.

If $|N[x_{i}]| = 1$ then $x_{i}$ is an isolated vertex, which is a contradiction. If $|N[x_{i}]| = 2$ then $x_{i}$ is a leaf, and therefore $G \approx K_{2}$. In the remaining case $|N[x_{i}]| = 3$ for every $1 \leq i \leq k$. Let $y_{i}$ and $z_{i}$ be the neighbors of $x_{i}$, and assume on the contrary that both are not simplicial. There exists $y'_{i} \in N(y_{i}) \setminus \{x_{i}, z_{i}\}$ and $z'_{i} \in N(z_{i}) \setminus \{x_{i}, y_{i}\}$. if $y'_{i} = z'_{i}$ then $(y'_{i}, y_{i}, x_{i}, z_{i})$ is a $C_{4}$, which is a contradiction. If $y'_{i} \in N(z'_{i})$ then $(z'_{i}, y'_{i}, y_{i}, x_{i}, z_{i})$ is a $C_{5}$, which is a contradiction, again. Therefore, $y'_{i}$ and $z'_{i}$ are distinct and non-adjacent.

Define $A = \{x_{i}\}$ and $B = \{y'_{i},z'_{i}\}$. Clearly, $A$ and $B$ are independent and disjoint. Therefore, they are contained in two disjoint maximal independent sets, $A^{*}$ and $B^{*}$, respectively. However, $x_{i} \not \in B^{*}$ since $x_{i} \in A^{*}$. Moreover, $y_{i}$ and $z_{i}$, are not in $B^{*}$, because they are dominated by $\{y'_{i}, z'_{i}\} \subseteq B^{*}$. Therefore $x_{i} \not\in N[B^{*}]$, which is a contradiction. At least one of $y_{i}$ and $z_{i}$ is simplicial.

$2 \implies 1$ Suppose that Condition 2 holds and $G \not\approx K_{2}$. Then $|S \cap T_{i}| \leq 1$ for every independent set $S \subseteq V(G)$ and for every $1 \leq i \leq k$. Moreover, every dominating independent set contains exactly $k$ vertices. If $A$ and $B$ are disjoint independent sets of $G$, then there exist $A \subseteq A^{*}$ and $B \subseteq B^{*}$ such that $A^{*}$ and $B^{*}$ are disjoint maximal independent sets of $G$. Therefore, $G \in \mathbf{W_2}$.
\end{proof}

\section{Graphs with girth at least 5}
In \cite{fh:wcg5} well-covered graphs with girth at least 5 are characterized. The following notation and definitions are used. A {\it leaf} is a vertex of degree 1. A {\it steam} is a vertex adjacent to a leaf. An edge connecting a leaf and a steam is called {\it pendant}. A 5-cycle $C$ is called {\it basic} if $C$ does not contain two adjacent vertices of degree 3 or more. A graph $G$ belongs to the family $\mathbf{PC}$ if $V(G)$ can be partitioned into two subsets, $P$ and $C$, such that the following holds.
\begin{itemize}
\item $P$ contains the vertices adjacent to the pendant edges.
\item The pendant edges are a perfect matching of $P$.
\item $C$ contains the vertices of the basic 5-cycles.
\item The basic 5-cycles are a partition of $C$.
\end{itemize}

\begin{theorem}
\label{wcg5}
\cite{fh:wcg5}
Let $G$ be a graph with girth 5 or more. Then the following conditions are equivalent.
\begin{enumerate}
\item $G$ is well-covered and contains a shedding vertex.
\item $G$ belongs to the family $\mathbf{PC}$.
\end{enumerate}
\end{theorem}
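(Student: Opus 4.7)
The plan is to prove each direction separately, with the interesting content concentrated in direction $(2)\Longrightarrow(1)$ being routine and $(1)\Longrightarrow(2)$ being structural.

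For $(2)\Longrightarrow(1)$, suppose $G\in\mathbf{PC}$. Let $I$ be any maximal independent set of $G$. For each pendant edge $uv$ with leaf $u$, maximality forces $|I\cap\{u,v\}|=1$, since $u$'s only neighbor is $v$. For each basic $5$-cycle $C$, the basicness condition (no two adjacent vertices of $C$ have degree $\ge 3$) restricts edges leaving $C$ to lie only at non-adjacent vertices, and a case analysis shows that maximality forces $|I\cap V(C)|=2$. Summing gives $|I|=|P|/2+2|C|/5$ independent of $I$, so $G$ is well-covered. For the shedding vertex, any stem $v$ with leaf neighbor $u$ works: for every independent $S\subseteq V(G)\setminus N[v]$, the unique neighbor of $u$ is $v\notin S$, so $S\cup\{u\}$ is independent, witnessing $v\in Shed(G)$.

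For $(1)\Longrightarrow(2)$, assume $G$ is well-covered, has girth at least $5$, and contains a shedding vertex. The strategy is to build the $\mathbf{PC}$-partition by first peeling off pendant edges, then decomposing the remainder. First, I would argue that the leaves and their stems form a disjoint union of pendant edges: girth $\ge 5$ forbids short cycles through leaves, and well-coveredness forbids a stem adjacent to two distinct leaves (any maximal independent set containing such a stem would be strictly smaller than one swapping the stem for its two leaves). Set $P$ to be this collection, let $G'=G\setminus P$, and verify $G'$ is well-covered with minimum degree $\ge 2$. Next, show $G'$ is a vertex-disjoint union of $5$-cycles using girth $\ge 5$ together with rigidity of $\alpha(G')$ on every maximal independent set. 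Finally, show each such $5$-cycle is basic: if a $5$-cycle contained two adjacent vertices of degree $\ge 3$, I would construct an independent set in $V(G)\setminus N[v]$ that dominates $N(v)$ for a suitably chosen vertex $v$ on the cycle, contradicting that $v$ is shedding.

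The main obstacle is the middle step, classifying the residual leaf-free well-covered graph $G'$ as a union of basic $5$-cycles rather than some arbitrary well-covered graph of girth $\ge 5$ with $\delta\ge 2$. This is precisely where the shedding hypothesis excludes exceptional configurations such as $C_7$, which is well-covered of girth $7$ but contains no shedding vertex: for any $v\in V(C_7)$, the two vertices of $N_2(v)$ lie at distance $3$ along the cycle, form an independent set, and dominate $N(v)$. Isolating and ruling out every such exceptional graph --- using the shedding vertex to derive a contradiction whenever the $\mathbf{PC}$ structure fails --- is the delicate technical core of the argument.
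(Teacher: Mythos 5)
First, a point of reference: the paper does not prove this statement at all --- it is quoted from Finbow and Hartnell \cite{fh:wcg5} and used as a black box --- so there is no in-paper proof to compare against; your attempt has to stand on its own.

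Your direction $(2)\Rightarrow(1)$ is essentially sound. The counting argument works: one vertex per pendant edge, and two per basic $5$-cycle, the latter because any two adjacent vertices of a basic cycle include one of degree $2$, whose closed neighbourhood lies entirely on the cycle and hence must be met by every maximal independent set. However, your shedding-vertex argument only treats the case $P\neq\varnothing$. When $P=\varnothing$ you must still produce a shedding vertex among the basic $5$-cycles (and note that $G$ need not be a disjoint union of $5$-cycles, since $\mathbf{PC}$ permits extra edges between distinct basic cycles). The fix is short --- a vertex $v$ of degree at least $3$ on a basic cycle has two cycle-neighbours of degree $2$ that can only be dominated from $N_2(v)$ by an adjacent pair, so $v$ is shedding; and if every vertex of the cycle has degree $2$ its component is $C_5$, all of whose vertices are shedding --- but it is missing.

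The serious gap is in $(1)\Rightarrow(2)$. The step ``show $G'$ is a vertex-disjoint union of $5$-cycles using girth $\geq 5$ together with rigidity of $\alpha(G')$'' is a restatement of the conclusion, not an argument; it is precisely the content of the Finbow--Hartnell classification, whose proof is a long structural case analysis. In particular the exceptional graphs are not limited to $C_7$: there are several further connected well-covered graphs of girth at least $5$ outside $\mathbf{PC}$ (the paper alludes to this finite list in the proof of Theorem \ref{shedwcg5}), and each would have to be isolated and then excluded using the shedding hypothesis. Intermediate claims are also unjustified: that $G'=G\setminus P$ has minimum degree $\geq 2$ does not follow from anything you say, since a vertex may lose all but one neighbour when the stems are deleted. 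Finally, note that the statement as written is actually false without connectivity --- the disjoint union of $C_7$ and $K_2$ is well-covered, has girth at least $5$, and contains a shedding vertex (an endpoint of the $K_2$), yet is not in $\mathbf{PC}$ --- so any argument that uses the existence of a single shedding vertex globally, as yours does, cannot close; the shedding hypothesis must be applied componentwise to a connected $G$. As it stands the proposal is an outline whose technical core, the part you yourself flag as delicate, is absent.
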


Theorem \ref{w2g5} is the main result of this section.

\begin{theorem}
\label{w2g5} The only graphs with girth 5 or more in the class $\mathbf{W_2}$ are $C_{5}$ and $K_{2}$.
\end{theorem}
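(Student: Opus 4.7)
The plan is to chain Theorem \ref{w2wcshed} (every vertex of a $\mathbf{W_2}$ graph without isolated vertices is shedding) with Theorem \ref{wcg5} (a well-covered graph of girth $\geq 5$ containing a shedding vertex lies in the family $\mathbf{PC}$), and then to rule out any $\mathbf{PC}$-structure beyond $K_2$ or a single basic 5-cycle by exhibiting a non-shedding vertex. Suppose $G$ has girth at least 5 and $G \in \mathbf{W_2}$. By Theorem \ref{w2shed}, $G$ is well-covered, has no isolated vertex and is not $P_3$; Theorem \ref{w2wcshed} then gives $Shed(G) = V(G)$, and Theorem \ref{wcg5} places $G \in \mathbf{PC}$, so $V(G) = P \cup C$ in the usual sense. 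I work under the connectedness convention implicit in this section.

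If $P \neq \emptyset$, take a leaf $\ell$ with stem $s$: were $s$ to have any neighbor $x \neq \ell$, the singleton $\{x\}$ would be an independent subset of $V(G) \setminus N[\ell]$ that cannot be extended by the unique vertex $s \in N(\ell)$, contradicting $\ell \in Shed(G)$. Thus $d(s) = 1$, so $\{\ell, s\}$ is a $K_2$ component and $G = K_2$. If $P = \emptyset$, then $V(G)$ partitions into vertex sets of basic 5-cycles; a single such cycle has no chord (a chord would create a cycle of length $\leq 4$), so $G = C_5$. Suppose for contradiction there are at least two basic 5-cycles. Connectedness provides a basic 5-cycle $C_1 = v_1 v_2 v_3 v_4 v_5$ containing a vertex of degree $\geq 3$, say $v_1$, with an outside neighbor $u$. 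The girth bound forces $u$'s only neighbor in $C_1$ to be $v_1$, since a second neighbor $v_i$ would yield a cycle of length $\leq 4$ (a triangle for $i \in \{2,5\}$ and a 4-cycle $v_1 u v_i v_{i \pm 1} v_1$ for $i \in \{3,4\}$). In particular $v_4 u \notin E(G)$, and the basic-cycle property applied to $v_1$ forces $d(v_2) = 2$, so $N(v_2) = \{v_1, v_3\}$. The set $S = \{v_4, u\}$ is then an independent set contained in $V(G) \setminus N[v_2]$ that dominates $N(v_2)$, because $v_4 v_3 \in E(G)$ and $u v_1 \in E(G)$. This contradicts $v_2 \in Shed(G)$, so only one basic 5-cycle is possible and $G = C_5$.

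The principal obstacle is ensuring the witness $S = \{v_4, u\}$ survives every permissible extra structure of $G$: additional outside neighbors of $v_3, v_4, v_5$, or further inter-cycle edges, must not accidentally create the edge $v_4 u$ or knock $v_4$ or $u$ out of $V(G) \setminus N[v_2]$. Both concerns are resolved by the girth bound, which pins $u$ to the single $C_1$-neighbor $v_1$ and thereby decouples $u$ from $v_4$ regardless of what else happens elsewhere, and by the basic-cycle definition, which forces $d(v_2) = 2$ the moment $v_1$ gains an outside neighbor.
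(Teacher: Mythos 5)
Your proof is correct and follows the same route as the paper: reduce via Theorem \ref{w2wcshed} and Theorem \ref{wcg5} to the family $\mathbf{PC}$, dispose of the leaf case, and then rule out a vertex of degree at least $3$ on a basic 5-cycle (with the same implicit connectedness convention the paper uses). The only difference is local: where the paper extends the disjoint independent sets $\{v_1'\}$ and $\{v_2,v_5\}$ to disjoint maximal independent sets and derives a contradiction from the $\mathbf{W_2}$ extension property, you exhibit the explicit witness $\{v_4,u\}$ showing that $v_2$ is not shedding --- both are valid, and your version additionally supplies the short argument (asserted without proof in the paper) that a $\mathbf{W_2}$ graph containing a leaf must be $K_2$.
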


\begin{proof}
Let $G$ be a graph with girth 5 or more in the class $\mathbf{W_2}$. Obviously, $G$ is well-covered. By Theorem \ref{w2wcshed}, $Shed(G) = V(G)$. By Theorem \ref{wcg5} $G$ belongs to the family $\mathbf{PC}$. The only graph in $\mathbf{W_2}$ which contains leaves is $K_{2}$. Suppose that  $G$  does not contain leaves and belongs to $\mathbf{W_2}$. Then $P=\varnothing$.

Let $C=(v_{1},\ldots,v_{5})$ be a basic cycle in $G$, and assume on the contrary that $d(v_{1}) \geq 3$. Then $d(v_{2})=d(v_{5})=2$. Let $v_{1}' \in N(v_{1}) \setminus \{v_{2},v_{5}\}$. Define $A=\{v_{1}'\}$ and $B=\{v_{2},v_{5}\}$. Since $A$ and $B$ are disjoint and independent, they are contained in two disjoint maximal independent sets, $A^{*}$ and $B^{*}$, of $G$. Since $v_{2}$ and $v_{5}$ belong to $B^{*}$, they do not belong to $A^{*}$. Hence, $v_{2}$ and $v_{5}$ are neighbors of vertices belonging to $A^{*}$. However, $v_{1} \not\in A^{*}$. Hence, $\{v_{3}, v_{4}\} \subseteq A^{*}$. Consequently, $A^{*}$ is not independent, which is a contradiction.

Therefore, the degree of every vertex of $C$ is 2, and $G \approx C_{5}$.
\end{proof}

\begin{theorem}
\label{shedwcg5} 
There exists an $O(n)$ algorithm which solves the following problem.\\
Input: A connected well-covered graph $G$ with girth 5 or more and a vertex $v \in V(G)$.\\
Question: Is $v$ is shedding.
\end{theorem}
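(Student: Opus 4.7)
The plan is to reduce the shedding test for $v$ to a purely local condition, using Theorem~\ref{wcg5} as the structural backbone. That theorem guarantees that in a connected well-covered graph $G$ of girth at least $5$, the existence of any shedding vertex forces $G \in \mathbf{PC}$, so $v$ must then be either a leaf, a stem, or a vertex of a basic $5$-cycle. I claim that in this setting, $v$ is shedding if and only if one of the following holds: (a) $v$ has a neighbor of degree $1$ (that is, $v$ is a stem); (b) $v$ itself has degree $1$ and its unique neighbor also has degree $1$ (so $G \approx K_{2}$); or (c) $v$ has two neighbors $v_1, v_4$ of degree $2$ in $G$ whose unique non-$v$ neighbors $v_2, v_3$ satisfy $v_2 v_3 \in E(G)$.

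Sufficiency of each condition is short. For (a), the leaf-neighbor $\ell$ extends any independent $S \subseteq V(G)\setminus N[v]$ to $S \cup \{\ell\}$, which remains independent since $\ell$'s only neighbor is $v \notin S$. For (b), $N[v] = V(G)$, so the only admissible $S$ is $\emptyset$ and $v$ is trivially shedding. For (c), any $S \subseteq V(G)\setminus N[v]$ dominating $\{v_1,v_4\}$ would have to contain $v_2$ (the only non-$v$ neighbor of $v_1$) and $v_3$ (the only non-$v$ neighbor of $v_4$), contradicting $v_2 v_3 \in E(G)$. Necessity is obtained via Theorem~\ref{wcg5}: if $v$ is shedding then $G \in \mathbf{PC}$, and a case analysis on $v$'s role in the $\mathbf{PC}$ decomposition recovers (a), (b), or (c). In the cycle-vertex case, if some cycle-neighbor $v_1$ of $v$ had degree at least $3$, then picking a non-cycle neighbor $w$ of $v_1$ and invoking girth $\geq 5$ to forbid the $4$-cycle $v_1 v_2 v_3 w$ produces an independent witness $\{w, v_3\} \subseteq V(G)\setminus N[v]$ dominating $N(v) = \{v_1,v_4\}$, contradicting that $v$ is shedding; hence both cycle-neighbors of $v$ must have degree $2$, which is exactly condition (c).

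The algorithm then follows directly: scan $N(v)$ for a degree-$1$ vertex (handling (a)); if $v$ is itself a leaf, check the degree of its unique neighbor (handling (b)); otherwise look for a pair of degree-$2$ neighbors of $v$ whose unique non-$v$ neighbors are adjacent (handling (c)). The main obstacle I expect is the $O(n)$ bound on step (c): a naive pair search might be superlinear in $d(v)$, but a careful implementation --- collect the candidate partners $v_2$ into a single list, and then, for each degree-$2$ neighbor $v_4$ of $v$, scan the adjacency list of its partner $v_3$ for a member of the collected list --- completes the task in linear total time, exploiting the structural sparsity enforced by well-coveredness together with girth $\geq 5$.
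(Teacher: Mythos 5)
Your overall route is the same as the paper's: invoke Theorem~\ref{wcg5} to place $G$ in $\mathbf{PC}$ as soon as one shedding vertex exists, then classify $v$ as leaf, stem, or basic-cycle vertex. What you do differently is package the outcome as a purely local criterion --- (a) stem, (b) $K_2$, (c) two degree-$2$ neighbors whose outer neighbors are adjacent --- so the algorithm never has to reconstruct the $\mathbf{PC}$ decomposition or locate the basic cycle through $v$; the decomposition is used only in the correctness proof. This buys you something concrete: your condition (c) is the correct statement of the cycle case, whereas the paper's Case~3 as printed has the biconditional reversed (it asserts that $v$ is shedding iff $N(\{v_1,v_2\})\setminus V(C)\neq\varnothing$, yet in $C_5$ itself every vertex is shedding and that set is empty; your witness $\{w,v_3\}$ is exactly the argument showing that a neighbor of $v_1$ outside the cycle destroys sheddingness). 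You also handle the $K_2$ leaf correctly, which the paper's ``Case 1: $v$ is a leaf, obviously not shedding'' elides. One small wrinkle in your necessity argument: writing $N(v)=\{v_1,v_4\}$ presumes $d(v)=2$; when $d(v)\geq 3$ you should instead observe that the basic-cycle condition forces both cycle-neighbors of $v$ to have degree $2$, so (c) holds immediately. The logic survives, but say it.

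The one genuine gap is the $O(n)$ bound for test (c), and you have correctly identified where it lives but not closed it. Your marked-list scan costs $\sum_{u\in D} d(p(u))$ where $D$ is the set of degree-$2$ neighbors of $v$; this is only $O(m)$ in general, and $m$ need not be $O(n)$ for these inputs --- attach a pendant leaf to every vertex of a girth-$6$ bipartite graph with $\Theta(n^{3/2})$ edges and you get a well-covered $\mathbf{PC}$ graph of girth at least $5$. ``Structural sparsity'' is not automatic; the fix is precisely the $\mathbf{PC}$ structure you set aside for the algorithm: a degree-$2$ neighbor $u$ of $v$ lies either in $P$, in which case $u$ is a stem and $p(u)$ is its leaf of degree $1$, or in $C$, in which case both neighbors of $u$ lie on $u$'s basic cycle, forcing $v$ onto that cycle --- and $v$ lies on at most one basic cycle since they partition $C$. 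Hence all but at most two of the partners $p(u)$ have degree $1$, the total scan length is $O(n)$, and the bound follows. Add that paragraph and your proof is complete (and, in the cycle case, more reliable than the one in the paper).
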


\begin{proof}
Let $I=(G,v)$ be an instance of the problem. By Theorem \ref{wcg5}, if $G$ does not belong to the family $\mathbf{PC}$ then $v$ not is shedding. Suppose $G$ belongs to the family $\mathbf{PC}$. Four cases are possible.

{\sc Case 1: $v$ is a leaf.} Obviously, $v$ is not shedding.

{\sc Case 2: $v$ is a neighbor of a leaf.} Obvously, $v$ is shedding.

{\sc Case 3: $v$ is on a basic cycle, $C$, and $d(v)=2$.} Let $N(v) = \{v_{1}, v_{2}\}$. The vertex $v$ is shedding if and only if $N(\{v_{1},v_{2}\}) \setminus V(C) \neq \varnothing$. 

{\sc Case 4: $v$ is on a basic cycle, $C$, and $d(v) \geq 3$.} There exist two vertices, $v_{1}$ and $v_{2}$, in $C$ which are adjacent to $v$. It holds that $d(v_{1}) = d(v_{2}) = 2$. There does not exist an independent set of $N_{2}(v)$ which dominates $\{v_{1},v_{2}\}$. Therefore, $v$ is shedding.

In \cite{fh:wcg5} a finite list of all connected well-covered graphs with girth at least 5 and without shedding vertices is presented. Hence, deciding whether $G$ belongs to the family $\mathbf{PC}$ can be completed in $O(1)$ time. Deciding which case holds for $v$ can be done in $O(n)$ time. If Case 3 holds then $O(n)$ time is needed to decide whether the condition $N(\{v_{1},v_{2}\}) \setminus V(C) \neq \varnothing$ holds. Otherwise, deciding whether $v$ is shedding can be done in $O(1)$ time. The total complexity of the algorithm is $O(n)$.
\end{proof}

\section{Well-covered graphs without cycles of lengths 4 and 6}
\begin{theorem}
\label{shedc46} The following problem can be solved in $O(n(n+m))$ time. \newline
Input: A graph $G$ without cycles of lengths 4 and 6, and a vertex $v \in V(G)$. \newline
Question: Is $v$ shedding ?
\end{theorem}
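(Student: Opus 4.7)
The plan is to exploit the restrictive structure that $C_4$- and $C_6$-freeness impose on the first and second neighborhoods of $v$. Let $N(v) = \{v_1, \ldots, v_k\}$ and $A_i = N(v_i) \setminus N[v]$. First, $C_4$-freeness forces the $A_i$'s to be pairwise disjoint, hence to partition $N_2(v)$: otherwise a common neighbor $w$ of two distinct $v_i, v_j$ outside $N[v]$ would produce the 4-cycle $v\,v_i\,w\,v_j$. Second, the absence of $C_6$ implies that every $w \in N_2(v)$ has at most one neighbor outside its own $A_{i(w)}$: two such neighbors $w' \in A_j$, $w'' \in A_l$ with $i(w), j, l$ pairwise distinct would yield the 6-cycle $v\,v_j\,w'\,w\,w''\,v_l$. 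Combined with the $C_4$-restriction forbidding $w$ from having two neighbors inside a single $A_j$, the inter-$A$ edges of $G[N_2(v)]$ therefore form a matching $M$.

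Since $v$ is non-shedding iff some independent set in $V(G) \setminus N[v]$ meets every $A_i$, the problem reduces to finding an \emph{independent transversal} $(w_1, \ldots, w_k)$ with $w_i \in A_i$ and no two $w_i, w_j$ joined by an edge of $M$. I would solve this by propagation. Initialize candidate sets $C_i := A_i$. Return ``shedding'' if some $C_i$ is empty. Otherwise, while some open $C_i$ is a singleton $\{w\}$, commit to $w$, close $A_i$, and delete $w$'s $M$-partner from its open $C_j$ (if any); an emptied $C_j$ triggers ``shedding''. When propagation stalls with open $A_i$'s remaining (all then satisfying $|C_i| \ge 2$), pick an arbitrary vertex from any open $C_i$, commit, and resume propagation until every $A_i$ is closed.

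The main obstacle is the correctness of those arbitrary choices: I must show that whenever propagation stalls with every open $|C_i| \ge 2$ and no $C_j$ empty, an independent transversal actually exists. I plan to establish this by encoding the residual instance (first pruned so that $|C_i| = 2$ for every remaining $i$) as a 2-SAT problem, with one variable $x_i$ per open $A_i$ and one 2-clause per edge of $M$. The key observation is that the matching property of $M$ precludes any directed path $x_i \to \cdots \to \neg x_i$ in the standard implication graph: both the first and the last clause on such a path would be incident to the vertex of $A_i$ corresponding to $x_i = T$, so that vertex would belong to two distinct edges of $M$---contradicting the matching. The symmetric argument on the other vertex of $A_i$ excludes any $\neg x_i \to x_i$ path as well, so no literal shares a strongly connected component with its negation and the 2-SAT is satisfiable; any satisfying assignment produces the desired transversal. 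For a remaining $A_i$ with $|C_i| \ge 3$, I would first commit a ``safe'' vertex (one whose $M$-partner is absent, or lies in a closed $A_j$ but differs from the vertex committed there), reducing to the binary subcase.

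For complexity, computing the $A_i$'s and the matching $M$ costs $O(n+m)$. The propagation performs at most $n$ commitments, and after each commitment a single linear sweep over the incidence structure suffices to update candidate sets, detect new singletons, and identify empty $C_j$'s; this yields the claimed $O(n(n+m))$ running time.
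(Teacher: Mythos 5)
Your argument is correct in substance, but it takes a genuinely different route from the paper. The paper also begins by observing that $C_4$-freeness makes every vertex of $N_2(v)$ adjacent to exactly one vertex of $N(v)$ and that $C_4$/$C_6$-freeness forces every component of $G[N_2(v)]$ to have at most two vertices; it then builds a unit-capacity flow network $s \rightarrow N(v) \rightarrow N_2(v) \rightarrow \{a_1,\dots,a_k\} \rightarrow t$ (one auxiliary node per component) and runs Ford--Fulkerson, declaring $v$ non-shedding exactly when the maximum flow equals $|N(v)|$. You instead partition $N_2(v)$ into the private neighborhoods $A_i$, observe that the inter-class edges form a matching, recast the question as an independent-transversal problem, and solve it by unit propagation justified by a 2-SAT satisfiability argument. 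Both exploit the same structural consequences of forbidding $C_4$ and $C_6$ and both meet the $O(n(n+m))$ bound; the flow formulation outsources all correctness to max-flow theory and is shorter to state, while your approach is self-contained, more combinatorially explicit, and isolates the reusable fact that a matching between the classes can never obstruct an independent transversal once every class has at least two candidates.

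Two points in your write-up need tightening. First, the lemma you state (a transversal \emph{exists} at every stall) is not quite what the algorithm needs: after an \emph{arbitrary} commitment, a subsequently emptied $C_j$ must still certify non-existence, so you must show that \emph{every} remaining candidate of every open class extends to a transversal. This does follow from your own implication-graph argument --- since no literal reaches its negation, each literal can be set true in some satisfying assignment --- or, more elementarily, from the observation that each commitment deletes at most one vertex from at most one other class, so after a stall at most one open class is ever a singleton and none can empty; but the step should be made explicit. Second, the ``safe vertex'' device for $|C_i|\ge 3$ is both unnecessary and unreliable (all three candidates may have $M$-partners in open classes, so no safe vertex need exist); simply pruning every open class to two arbitrary candidates, as you already propose, suffices, because deleting candidates only strengthens the instance and the matching property is inherited.
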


\begin{proof} Every vertex of $N_{2}(v)$ is adjacent to
exactly one vertex of $N(v)$, or otherwise $G$ contains a $C_{4}$.
Every component of $N_{2}(v)$ contains at most $2$ vertices, or
otherwise $G$ contains either a $C_{4}$ or a $C_{6}$. Let $A_{1},\ldots,A_{k}$ be the components of $N_{2}(v)$.

The following algorithm decides whether $v$ is shedding. It constructs a flow network $F_{v} = \{ G_{F} = (V_{F}, E_{F}), s \in V_{F}, t \in V_{F}, c:E_{F}\longrightarrow \mathbb{R} \}$. A similar construction was used in \cite{lt:relatedc4}.

Define $V_{F} = N(v) \cup N_{2}(v) \cup \{a_{1},\ldots ,a_{k}, s, t\}$, where
$a_{1},\ldots,a_{k}, s, t$ are new vertices, $s$ and $t$ are the source and sink of
the network, respectively. Denote $A = \{a_{1}, \ldots , a_{k}\}$.

The directed edges $E_{F}$ are:

\begin{itemize}
\item the directed edges from $s$ to each vertex of $N(v)$.

\item all directed edges $v_{1}v_{2}$ s.t. $v_{1}\in N(v)$, $v_{2} \in N_{2}(v)$ and $v_{1}v_{2}\in E$.

\item the directed edges $za_{i}$, for each $1 \leq i \leq k$ and for each $z \in A_{i}$.

\item the directed edges $a_{i}t$, for each $1\leq i\leq k$.
\end{itemize}

Let $c \equiv 1$. Invoke Ford and Fulkerson's algorithm for finding a maximum flow $f:E_{F}\longrightarrow\mathbb{R}$ in the network. The flow in a vertex $x \in V_{F}$ is
defined by: $\Sigma_{(u,x) \in E_{F}} f(u,x)$. Let $S_{v}$ be the set of
vertices in $N_{2}(v)$ in which there is a positive flow. One can prove that $S_{v}$ is a maximum independent set of $N_{2}(v)$, Ford and Fulkerson's algorithm terminates after $|S_{v}|$ iterations, and $|f| = |S_{v}| = |N(v) \cap N(S_{v})|$. For more details see \cite{lt:relatedc4}.

If $|S_{v}| = |N(v)|$ then $S_{v}$ dominates $N(v)$, and therefore $v$ is not shedding. However, if $|S_{v}| < |N(v)|$ then $S_{v}$ does not dominate $N(v)$, and there
does not exist an independent set of $N_{2}(v)$ which dominates $N(v)$. In this case the algorithm terminates announcing that $v$ is shedding.

Every iteration of Ford and Fulkerson's algorithm takes $O(n+m)$ time. In the restricted case of this context, the algorithm of Ford and Fulkerson terminates after $O(n)$ iterations. Hence, total complexity of this algorithm is $O(n(n+m))$. 
\end{proof}

\begin{corollary}
\label{w2wcc46} The following problem can be solved in $O(n^{2}(n+m))$ time. \newline
Input: A well-covered graph $G$ without cycles of lengths 4 and 6. \newline
Question: Is $G$ in the class $\mathbf{W_2}$ ?
\end{corollary}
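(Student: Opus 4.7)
The plan is to deduce this corollary immediately from the two ingredients already established: Theorem \ref{w2heredshed}, which reduces the $\mathbf{W_2}$ recognition problem on a well-covered graph in a hereditary family to $n$ shedding-vertex tests; and Theorem \ref{shedc46}, which decides the shedding question in $O(n(n+m))$ time when the input graph has no cycles of length $4$ or $6$.

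First I would observe that the family of graphs forbidding $C_{4}$ and $C_{6}$ is hereditary, since deleting vertices cannot create new cycles. Hence the class satisfies the hypothesis of Theorem \ref{w2heredshed}. Next I would apply Theorem \ref{shedc46} with $f(n) = n(n+m)$ as the per-vertex shedding test. Theorem \ref{w2heredshed} then yields an algorithm whose total running time is $O(n \cdot f(n)) = O(n^{2}(n+m))$.

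The only thing worth being careful about is the isolated-vertex check that appears in the proof of Theorem \ref{w2heredshed}: a well-covered graph containing an isolated vertex cannot be in $\mathbf{W_2}$, and this preliminary check costs only $O(n+m)$, which is absorbed into the stated bound. No new combinatorial argument is required, so I do not anticipate a genuine obstacle here; the corollary is simply the composition of the hereditary-reduction machinery with the shedding-vertex procedure for $\{C_{4}, C_{6}\}$-free graphs.
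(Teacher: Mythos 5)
Your proposal matches the paper's proof exactly: the corollary is obtained by combining Theorem \ref{w2heredshed} with the $O(n(n+m))$ shedding test of Theorem \ref{shedc46}, noting that $\{C_{4},C_{6}\}$-free graphs form a hereditary family. The extra remarks on heredity and the isolated-vertex check are correct and harmless elaborations of what the paper leaves implicit.
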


\begin{proof}
Follows immediately from Theorems \ref{w2heredshed} and \ref{shedc46}. 
\end{proof}

Note that the complexity status of recognizing well-covered graphs without cycles of lengths 4 and 6 is not known. If recognizing well-covered graphs without cycles of lengths 4 and 6 can be done polynomially then recognizing  graphs in $\mathbf{W_2}$ without cycles of lengths 4 and 6 is a polynomial task, too.

\section{$\alpha(G) = k$}
This section considers graphs with $\alpha(G) = k$. It supplies polynomial algorithms for recognizing well-covered graphs, shedding vertices and $\mathbf{W_2}$ graphs.

\begin{theorem}
\label{wcalpha} Let $k \geq 2$. The following problem can be solved in $O(n^{k-1})$ time. \newline
Input: A graph $G$ with $\alpha(G) = k$. \newline
Question: Is $G$ well-covered?
\end{theorem}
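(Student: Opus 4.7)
\medskip
\noindent\textbf{Proof plan.} The plan is to reduce the question to the existence of a ``small'' maximal independent set. By definition, a graph $G$ with $\alpha(G)=k$ is well-covered precisely when every maximal independent set has cardinality $k$, equivalently when $G$ contains no maximal independent set of cardinality at most $k-1$. Since a maximal independent set is exactly an independent set that is also dominating, this boils down to the following question: does $G$ admit an independent dominating set of cardinality at most $k-1$? I would first establish this chain of equivalences directly from the definitions (noting, as the example at the start of Section 2.1 suggests, that smaller maximal independent sets really can exist and so all sizes from $1$ up to $k-1$ must be considered, not just size $k-1$).

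To decide the reduced question, I would enumerate every subset $S\subseteq V(G)$ with $1\le |S|\le k-1$. There are $\sum_{j=1}^{k-1}\binom{n}{j}=O(n^{k-1})$ such subsets. For each $S$, test two properties: (i) independence, by examining the $\binom{|S|}{2}=O(k^2)$ pairs in $S$, and (ii) domination, by checking whether every vertex of $V(G)\setminus S$ has a neighbor in $S$. If some candidate $S$ passes both tests, the algorithm outputs that $G$ is not well-covered, witnessed by the maximal independent set $S$ of size $<k$; otherwise it outputs that $G$ is well-covered.

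The main obstacle, and the only delicate point, is fitting the running time into the advertised $O(n^{k-1})$ bound. I would enumerate the candidate subsets along a traversal of the subset lattice in which consecutive subsets differ by a single element, and maintain incrementally two counters: the number of intra-$S$ edges (for the independence test) and the number of vertices of $V(G)\setminus S$ that have no neighbor in $S$ (for the domination test). With $k$ treated as a fixed parameter, each elementary insertion or deletion updates these counters in time bounded by the degree of the modified vertex, so the amortized cost per candidate is absorbed into the enumeration itself, yielding the claimed $O(n^{k-1})$ total. Correctness is immediate from the equivalence above.
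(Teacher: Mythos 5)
Your proposal is essentially the paper's own proof: both enumerate all $O(n^{k-1})$ vertex subsets of size at most $k-1$ and declare $G$ not well-covered exactly when one of them is independent and dominating, with the same justification that a maximal independent set is an independent dominating set and that every size from $1$ to $k-1$ must be tried. The one place you diverge is the running-time accounting, and your instinct that this is ``the only delicate point'' is right --- the paper simply asserts that each independence-plus-domination test costs $O(k^2)$, which is true for independence but not for domination, whose naive cost is $\Theta(n)$ per set. However, your amortized fix does not fully close this gap either: a single-element update costs $O(d(v))$, and summing over a Gray-code or DFS traversal of the subset lattice gives $O(n^{k-2}m)$ rather than $O(n^{k-1})$, which matches the claimed bound only for sparse graphs; so your analysis, while more honest than the paper's, still leaves the stated $O(n^{k-1})$ bound unjustified for dense graphs (the bound $O(n^{k})$, or $O(n^{k-2}m+n^{k-1})$ with your scheme, is what the argument actually delivers).
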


\begin{proof}
The following algorithm solves the problem. For every set $S \subseteq V(G)$ such that $|S| < k$, decide whether $S$ is independent and dominates $V(G)$. Once a dominating independent set is found, the algorithm terminates announcing that $G$ is not well-covered. However, if all sets of size smaller than $k$ were checked, and none of them is dominating and independent then $G$ is well-covered.

The number of sets which are checked is $O(n^{k-1})$. For each such a set, deciding whether it is independent and dominating takes $O(k^{2})$ time, which is constant for every fixed $k$. Therefore, deciding whether $G$ is well-covered can be done in $O(n^{k-1})$ time.
\end{proof}

\begin{theorem}
\label{shedalpha} Let $k \geq 2$. The following problem can be solved in $O(n^{k})$ time. \newline
Input: A graph $G$ with $\alpha(G) = k$ and a vertex $v \in V(G)$. \newline
Question: Is $v$ a shedding vertex?
\end{theorem}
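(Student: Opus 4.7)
The plan is to brute-force enumerate the possible witnesses that $v$ is non-shedding, using the assumption $\alpha(G) = k$ to keep the enumeration polynomial. Recall that $v$ is shedding precisely when no independent set $S \subseteq V(G) \setminus N[v]$ dominates $N(v)$, so the task reduces to deciding whether such a witness $S$ exists.

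The first step would be to bound the size of any potential witness. If $S \subseteq V(G) \setminus N[v]$ is independent, then $v$ is nonadjacent to every vertex of $S$, so $S \cup \{v\}$ is itself an independent set of $G$. Since $\alpha(G) = k$, we have $|S \cup \{v\}| \leq k$, hence $|S| \leq k-1$. It therefore suffices to search for a witness among subsets of $V(G) \setminus N[v]$ of cardinality at most $k-1$.

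The algorithm then iterates over every such subset $S$; for each, it verifies in $O(k^{2})$ time that $S$ is independent and in $O(kn)$ time that $N(v) \subseteq N[S]$. If some $S$ passes both tests, announce that $v$ is not shedding; otherwise, once every candidate has been rejected, announce that $v$ is shedding. Correctness is immediate from the size reduction above. The running time is $\sum_{i=0}^{k-1}\binom{n}{i} = O(n^{k-1})$ candidates, each processed in $O(kn) = O(n)$ time for fixed $k$, giving $O(n^{k})$ in total.

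The main (and essentially only) nontrivial step is the observation $|S| \leq k-1$; without it one would not know how to truncate the search. After that point the argument is a direct analogue of the enumeration used in Theorem \ref{wcalpha}, and the complexity bound falls out of a routine count of subsets of bounded size.
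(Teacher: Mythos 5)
Your proposal is correct and follows essentially the same route as the paper: bound the witness size by $k-1$ via the observation that $S\cup\{v\}$ is independent, enumerate all $O(n^{k-1})$ candidate sets, and check each in $O(n)$ time. The only cosmetic difference is that the paper restricts the search to subsets of $N_{2}(v)$ rather than all of $V(G)\setminus N[v]$, which is an equivalent (and equally valid) choice of ground set.
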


\begin{proof}
Since $\alpha(G) = k$, an independent set of $N_{2}(v)$ contains at most $k-1$ vertices.

The following algorithm solves the problem. For every set $S \subseteq N_{2}(v)$ such that $|S| \leq k-1$, decide whether $S$ is independent and dominates $N(v)$. Once an independent set of $N_{2}(v)$ which dominates $N(v)$ was found, the algorithm terminates announcing that $v$ is not shedding. If all possible sets were checked, and none of them is independent and dominates $N(v)$ then $v$ is shedding.

The number of sets which are checked is $O(n^{k-1})$. For each such a set, deciding whether it is independent and dominates $N(v)$ takes $O(n)$ time. Therefore, deciding whether $v$ is shedding can be done in $O(n^{k})$ time.
\end{proof}

It follows from Theorem \ref{w2heredwc} and Theorem \ref{wcalpha} that recognizing $\mathbf{W_2}$ graphs with $\alpha(G) = k$ can be done in $O(n^{k})$ time. Theorem \ref{w2complexity} finds another algorithm which solves this problem with the same complexity.

\begin{theorem}
\label{w2complexity} Let $k \geq 2$. The following problem can be solved in $O(n^{k})$ time. \newline
Input: A graph $G$ with $\alpha(G) = k$. \newline
Question: Is $G \in \mathbf{W_2}$ ?
\end{theorem}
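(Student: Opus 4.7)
The plan is to apply Theorem \ref{w2shed}: after verifying $G \not\approx P_3$ and $G$ has no isolated vertex in $O(n+m)$ time, $G \in \mathbf{W_2}$ is equivalent to well-coveredness of $G \setminus v$ for every $v \in V(G)$. Since $\alpha(G)=k$, we have $\alpha(G\setminus v)\in\{k-1,k\}$ for every $v$, so any witness that some $G\setminus v$ is not well-covered must be a maximal independent set of $G \setminus v$ of cardinality strictly less than $\alpha(G\setminus v)$, hence of cardinality at most $k-1$.

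I would therefore make a single pass through all subsets $S \subseteq V(G)$ with $|S|\leq k-1$, of which there are $O(n^{k-1})$. For each $S$, I test independence in $O(k^2)$ time and compute the residue $R(S):=V(G)\setminus N[S]$ in $O(n)$ time. The classification is: if $R(S)=\varnothing$ then $S$ is an independent dominating set of $G$ of cardinality less than $k$, so $G$ is not well-covered and $G\not\in\mathbf{W_2}$; if $R(S)=\{u\}$ for a unique $u$ then $S$ is a maximal independent set of $G\setminus u$ of size at most $k-1$, which witnesses non-well-coveredness of $G\setminus u$ except in the borderline subcase where $|S|=k-1$ and $\alpha(G\setminus u)=k-1$; if $|R(S)|\geq 2$ then $S$ is maximal in no $G\setminus v$ and can be discarded.

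To dispose of the borderline subcase I would precompute, once and for all, which vertices $u$ satisfy $\alpha(G\setminus u)=k-1$, equivalently which vertices lie in every maximum independent set of $G$. This can be done by enumerating all $\binom{n}{k}=O(n^k)$ size-$k$ subsets of $V(G)$, testing independence in $O(k^2)$ time, and tabulating for each vertex the count of maximum independent sets containing it against the total count; the preprocessing costs $O(n^k)$ since $k$ is fixed.

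Combining, the total running time is $O(n^k)$ for the preprocessing plus $O(n^{k-1})\cdot O(n)=O(n^k)$ for the main enumeration, hence $O(n^k)$ overall. The main obstacle I anticipate is establishing completeness of the enumeration: one must verify that every maximal independent set $S$ of $G\setminus v$ with $|S|\leq k-1$ arises as some set in the pass above with $R(S)\subseteq\{v\}$, which follows routinely from the definition of maximality in $G\setminus v$ together with the fact that $v\notin N[S]$ precisely when $S\cup\{v\}$ is independent in $G$.
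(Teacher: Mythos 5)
Your proposal is correct, but it reaches the conclusion by a genuinely different route than the paper. Both arguments share the same skeleton---enumerate every independent set $S$ with $|S|\le k-1$ and inspect the residue $V(G)\setminus N[S]$---but the paper reasons through Theorem \ref{w2wcshed} (shedding vertices) rather than Theorem \ref{w2shed} (vertex-deleted well-coveredness). Under the shedding characterization, any independent $S$ with $N[S]=V(G)\setminus\{u\}$ immediately certifies that $u$ is not shedding and hence that $G\notin\mathbf{W_2}$; no comparison of $|S|$ against $\alpha(G\setminus u)$ is ever needed, so your borderline subcase, and the extra $O(n^k)$ preprocessing you introduce to resolve it, simply do not arise. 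It is worth noting that your borderline case is in fact vacuous for the inputs that reach it: in a well-covered graph with no isolated vertices no vertex lies in every maximum independent set (any neighbor of such a vertex extends to a maximal, hence maximum, independent set avoiding it), so $\alpha(G\setminus u)=k$ for all $u$ whenever $G$ is well-covered, while if $G$ is not well-covered your $R(S)=\varnothing$ branch already rejects via a small maximal independent set of $G$ itself. Your treatment is nonetheless sound, stays within the $O(n^{k})$ budget, and your completeness argument---that every witness of non-well-coveredness of some $G\setminus v$ has size at most $k-1$ and satisfies $R(S)\subseteq\{v\}$---is exactly the right one; the paper's choice of characterization just buys a shorter case analysis at no extra cost.
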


\begin{proof}
If $G$ contains isolated vertices then obviously $G \not\in \mathbf{W_2}$. Otherwise, check all sets of vertices with size smaller than $k$. For each such a set, $S$, if $S$ is independent and dominates $V(G)$, then $G$ is not well-covered, and therefore it is not in $\mathbf{W_2}$. If $S$ is independent and $N[S] = V(G) \setminus \{v\}$ for some $v \in V(G)$, then $v$ is not shedding, and therefore $G \not\in \mathbf{W_2}$. However, if $|V(G) \setminus N[S]| \geq 2$ for every independent set $S$ with size smaller than $k$, then $G$ is well-covered and all vertices are shedding. By Theorem \ref{w2wcshed}, $G$ belongs to $\mathbf{W_2}$.

This algorithm considers $O(n^{k-1})$ sets. Each set is checked in $O(n^{k})$ time. Hence, the algorithm terminates in $O(n^{k})$ time.
\end{proof}

\section{Conclusions}
Let $\mathcal{G}(\widehat{C_{i_{1}}},\ldots,\widehat{C_{i_{k}}})$ be the family of
all graphs which do not contain $C_{i_{1}}$,\ldots,$C_{i_{k}}$. The following table presents complexity results concerning the five major problems presented in this paper. The empty table cells correspond to unsolved cases.

\begin{center}%
\begin{tabular}
[c]{|c|c|c|c|c|c|}\hline
\textbf{{Input}} & $\mathbf{WC}$ & $\mathbf{{W2}}$ & $\mathbf{WCW2}$ & $\mathbf{SHED}$ & $\mathbf{WCSHED}$ \\\hline%
\begin{tabular}
[c]{c}%
\\
general
\end{tabular}
&
\begin{tabular}
[c]{c}%
\textbf{co}-\textbf{NPC}\\
\cite{cs:note,sknryn:compwc}%
\end{tabular}
&
\begin{tabular}
[c]{c}%
\\
\end{tabular}
&
\begin{tabular}
[c]{c}%
\\
\end{tabular}
&
\begin{tabular}
[c]{c}%
\textbf{co}-\textbf{NPC}\\
Th. \ref{shedc3conpc}
\end{tabular}
&
\begin{tabular}
[c]{c}
\end{tabular}
\\\hline
\begin{tabular}
[c]{c}%
\\
claw-free
\end{tabular}
&
\begin{tabular}
[c]{c}%
\textbf{P}\\
\cite{tata:wck13f}
\end{tabular}
&
\begin{tabular}
[c]{c}%
\textbf{P}\\
Th. \ref{w2claw}
\end{tabular}
&
\begin{tabular}
[c]{c}%
\textbf{P}\\
Cor. \ref{w2wcclaw}
\end{tabular}
&
\begin{tabular}
[c]{c}%
\textbf{P}\\
Th. \ref{shedclaw}
\end{tabular}
&
\begin{tabular}
[c]{c}%
\textbf{P}\\
Th. \ref{shedclaw}
\end{tabular}
\\\hline
\begin{tabular}
[c]{c}%
\\
$\mathcal{G}(\widehat{C_{3}})$
\end{tabular}
&
\begin{tabular}
[c]{c}%
\end{tabular}
&
\begin{tabular}
[c]{c}%
\\
\end{tabular}
&
\begin{tabular}
[c]{c}%
\\
\end{tabular}
&
\begin{tabular}
[c]{c}%
\textbf{co}-\textbf{NPC}\\
Th. \ref{shedc3conpc}
\end{tabular}
&
\begin{tabular}
[c]{c}
\end{tabular}
\\\hline
\begin{tabular}
[c]{c}%
\\
$\mathcal{G}(\widehat{C_{3}},\widehat{C_{4}})$
\end{tabular}
&
\begin{tabular}
[c]{c}%
\textbf{P}\\
\cite{fh:wcg5}
\end{tabular}
&
\begin{tabular}
[c]{c}%
\textbf{P}\\
Th. \ref{w2g5} 
\end{tabular}
&
\begin{tabular}
[c]{c}%
\textbf{P}\\
Th. \ref{w2g5} 
\end{tabular}
&
\begin{tabular}
[c]{c}%
\\
\end{tabular}
&
\begin{tabular}
[c]{c}%
\textbf{P}\\
Th. \ref{shedwcg5} 
\end{tabular}
\\\hline
\begin{tabular}
[c]{c}%
\\
$\mathcal{G}(\widehat{C_{5}})$
\end{tabular}
&
\begin{tabular}
[c]{c}%
\\
\end{tabular}
&
\begin{tabular}
[c]{c}%
\end{tabular}
&
\begin{tabular}
[c]{c}%
\end{tabular}
&
\begin{tabular}
[c]{c}%
\textbf{P}\\
Cor. \ref{c5shedalg} 
\end{tabular}
&
\begin{tabular}
[c]{c}%
\textbf{P}\\
Cor. \ref{c5shedalg} 
\end{tabular}
\\\hline
\begin{tabular}
[c]{c}%
\\
$\mathcal{G}(\widehat{C_{3}},\widehat{C_{5}})$
\end{tabular}
&
\begin{tabular}
[c]{c}%
\\
\end{tabular}
&
\begin{tabular}
[c]{c}%
\textbf{P}\\
Cor. \ref{w2bipartite}
\end{tabular}
&
\begin{tabular}
[c]{c}%
\textbf{P}\\
Cor. \ref{w2bipartite}
\end{tabular}
&
\begin{tabular}
[c]{c}%
\textbf{P}\\
Cor. \ref{c5shedalg} 
\end{tabular}
&
\begin{tabular}
[c]{c}%
\textbf{P}\\
Cor. \ref{c5shedalg} 
\end{tabular}
\\\hline
\begin{tabular}
[c]{c}%
\\
$\mathcal{G}(\widehat{C_{4}},\widehat{C_{5}})$
\end{tabular}
&
\begin{tabular}
[c]{c}%
\textbf{P}\\
\cite{fhn:wc45}
\end{tabular}
&
\begin{tabular}
[c]{c}%
\textbf{P}\\
Th. \ref{w2c45}
\end{tabular}
&
\begin{tabular}
[c]{c}%
\textbf{P}\\
Th. \ref{w2c45}
\end{tabular}
&
\begin{tabular}
[c]{c}%
\textbf{P}\\
Cor. \ref{c5shedalg} 
\end{tabular}
&
\begin{tabular}
[c]{c}%
\textbf{P}\\
Cor. \ref{c5shedalg} 
\end{tabular}
\\\hline
\begin{tabular}
[c]{c}%
\\
$\mathcal{G}(\widehat{C_{4}},\widehat{C_{6}})$
\end{tabular}
&
\begin{tabular}
[c]{c}%
\\
\end{tabular}
&
\begin{tabular}
[c]{c}%
\\
\end{tabular}
&
\begin{tabular}
[c]{c}%
\textbf{P}\\
Cor. \ref{w2wcc46} 
\end{tabular}
&
\begin{tabular}
[c]{c}%
\textbf{P}\\
Th. \ref{shedc46}
\end{tabular}
&
\begin{tabular}
[c]{c}%
\textbf{P}\\
Th. \ref{shedc46}
\end{tabular}
\\\hline
\end{tabular}

\end{center}

\section{A conjecture}
A vertex $v \in V(G)$ is shedding if and only if for every independent set $S \subseteq N_{2}(v)$ it holds that $N(v) \setminus N(S) \neq \varnothing$. A restricted case of a shedding vertex is a vertex $v$ such that for every maximal independent set $S$ of $V(G) \setminus N[v]$ it holds that $\alpha(N(v) \setminus N(S)) = 1$. Assume that $G$ is in the class $\mathbf{W_2}$, $v \in V(G)$, and $S$ is a maximal independent set of $V(G) \setminus N[v]$. Then $H = G[V(G) \setminus N[S]] = G[N[v] \setminus N(S)]$ is well-covered. However, $\{v\}$ is a maximal independent set of $H$. Hence, $H$ is a clique. Therefore, Theorem  \ref{w2wcshed} is an instance of Theorem \ref{w2alpha}.

\begin{theorem}
\label{w2alpha}
For every well-covered graph $G$ the following assertions are equivalent:
\begin{enumerate}
\item $G$ is in the class $\mathbf{W_2}$.
\item $\alpha(N(v) \setminus N(S)) = 1$ for every vertex $v \in V(G)$ and for every maximal independent set $S$ of $V(G) \setminus N[v]$.
\end{enumerate}
\end{theorem}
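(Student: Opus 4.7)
The plan is to reduce the equivalence to Theorem \ref{w2wcshed} via a basic closure property of well-covered graphs: if $G$ is well-covered and $S \subseteq V(G)$ is any independent set, then $G[V(G)\setminus N[S]]$ is itself well-covered with independence number $\alpha(G)-|S|$, because any maximal independent set $T$ of this induced subgraph extends $S$ to a maximal independent set $S\cup T$ of $G$, which has cardinality $\alpha(G)$. I would record this observation first, as both directions of the theorem rest on it.

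For the forward direction ($1\Rightarrow 2$), I would fix $v\in V(G)$ and a maximal independent set $S$ of $V(G)\setminus N[v]$, and set $H:=G[V(G)\setminus N[S]]$. Maximality of $S$ in $V(G)\setminus N[v]$ forces $V(H)\subseteq N[v]$, and since $S\cap N[v]=\varnothing$ we have $v\in V(H)$; thus $V(H)=\{v\}\cup(N(v)\setminus N(S))$, and $\{v\}$ is a maximal independent set of $H$. By the closure property $H$ is well-covered, so $\alpha(H)=1$, i.e.\ $H$ is a clique, which already gives $\alpha(N(v)\setminus N(S))\leq 1$. To upgrade $\leq$ to $=$, I would invoke Theorem \ref{w2wcshed}: since $G\in\mathbf{W_2}$, the vertex $v$ is shedding, so $S$ cannot dominate $N(v)$, meaning $N(v)\setminus N(S)\neq\varnothing$.

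For the backward direction ($2\Rightarrow 1$), I would first note that condition 2 forbids isolated vertices, since an isolated $v$ would yield $N(v)\setminus N(S)=\varnothing$ and $\alpha=0\ne 1$. Then, to verify $Shed(G)=V(G)$, I would take an arbitrary $v\in V(G)$ and an arbitrary independent set $T\subseteq V(G)\setminus N[v]$, extend $T$ greedily to a maximal independent set $S$ of $V(G)\setminus N[v]$, and use condition 2 to pick $u\in N(v)\setminus N(S)$; then $T\cup\{u\}\subseteq S\cup\{u\}$ is independent, certifying that $v$ is shedding. Theorem \ref{w2wcshed} then delivers $G\in\mathbf{W_2}$.

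The proof is essentially bookkeeping, and I do not anticipate a serious obstacle; the only mild subtlety is recognizing in the forward direction that $V(H)=\{v\}\cup(N(v)\setminus N(S))$ and that $\{v\}$ is maximal in $H$, after which well-coveredness of $H$ immediately forces it to be a clique. Everything else is a transparent application of Theorem \ref{w2wcshed}.
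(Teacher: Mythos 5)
Your proposal is correct and follows essentially the same route as the paper, which sketches exactly this argument in the paragraph preceding the theorem: $H=G[V(G)\setminus N[S]]=G[N[v]\setminus N(S)]$ is well-covered with $\{v\}$ maximal in it, hence a clique, and the equivalence then reduces to the shedding-vertex characterization of Theorem \ref{w2wcshed}. Your write-up merely fills in the routine details (the closure property, the non-emptiness of $N(v)\setminus N(S)$, and the exclusion of isolated vertices) that the paper leaves implicit.
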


We conjecture that Theorem \ref{w2alpha} holds for general graphs.

\begin{conjecture}
\label{w2conj}
For every graph $G$ the following assertions are equivalent:
\begin{enumerate}
\item $G$ is in the class $\mathbf{W_2}$.
\item $\alpha(N(v) \setminus N(S)) = 1$ for every vertex $v \in V(G)$ and for every maximal independent set $S$ of $V(G) \setminus N[v]$.
\end{enumerate}
\end{conjecture}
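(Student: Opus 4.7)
The direction $1 \Rightarrow 2$ is immediate from Theorem \ref{w2alpha}: if $G\in\mathbf{W_{2}}$ then $G$ is well-covered, so that theorem applies. The substance of the conjecture is the converse, which I would attempt by strong induction on $|V(G)|$. The degenerate cases are easy to dispatch: condition 2 forbids isolated vertices (for an isolated $v$ one would need $\alpha(\varnothing)=1$), and $G \not\approx P_{3}$ is seen by checking the middle vertex.

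The key inductive reduction is the claim that condition 2 is inherited by $G - N[v]$ for every $v \in V(G)$. Given $u\in V(G)\setminus N[v]$ and a maximal independent set $S'$ of $(G-N[v]) - N_{G-N[v]}[u]$, I would argue that $S'\cup\{v\}$ is a maximal independent set of $G-N[u]$: it is independent since $S'\subseteq V(G)\setminus N[v]$, and any candidate extension either lies in $N(v)$ (blocked by $v$) or lies outside $N[v]$ (blocked by $S'$ via its maximality in $G-N[v]$). Applying condition 2 on $G$ to $(u,\,S'\cup\{v\})$ then yields that $N_{G}(u)\setminus N_{G}(S'\cup\{v\})$ is a non-empty clique, and a routine set-theoretic check using $v\notin N_{G}(u)$ identifies this set with $N_{G-N[v]}(u)\setminus N_{G-N[v]}(S')$. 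By induction $G-N[v]\in\mathbf{W_{2}}$; in particular $G-N[v]$ is well-covered. For any maximal independent set $J$ of $G$ and any $v\in J$, the set $J\setminus\{v\}$ is a maximal independent set of $G-N[v]$, so $|J| = \alpha(G-N[v])+1$.

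The main obstacle is to conclude that the value $f(v):=\alpha(G-N[v])+1$ is the same for every $v\in V(G)$, which would force $G$ to be well-covered and then Theorem \ref{w2alpha} would deliver $G\in\mathbf{W_{2}}$. Here $f(v)=f(v')$ holds whenever $v,v'$ lie in a common maximal independent set, hence whenever $vv'\notin E(G)$, so $f$ is constant on each connected component of the complement $\overline{G}$. When $\overline{G}$ is disconnected the graph $G$ is a join $G_{1}*G_{2}$, and condition 2 descends cleanly to condition 2 on each $G_{i}$ (the $V_{3-i}$-terms cancel in $N_{G}(v)\setminus N_{G}(S)$) but offers no mechanism for enforcing $\alpha(G_{1})=\alpha(G_{2})$. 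I suspect the conjecture as stated may in fact fail in this regime: the graph $G=C_{5}*3K_{2}$, the join of two members of $\mathbf{W_{2}}$ with no dominating vertex, can be verified directly to satisfy condition 2 at every vertex, yet its maximal independent sets have sizes $2$ (those in $V(C_{5})$) and $3$ (those in $V(3K_{2})$), so $G$ is not well-covered and $G\notin\mathbf{W_{2}}$. My proof attempt would therefore either succeed under an additional hypothesis (such as $\overline{G}$ being connected, i.e., $G$ not being a proper join), or else crystallize into a counterexample refuting Conjecture \ref{w2conj} in its present form.
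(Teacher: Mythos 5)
The statement you were asked to prove appears in the paper only as a conjecture; the authors offer no proof, so there is nothing to compare your argument against. Your direction $1\Rightarrow 2$ via Theorem~\ref{w2alpha} is exactly what the authors intend, and your inductive framework for the converse correctly isolates the genuine obstruction: condition~2 does pass to $G-N[v]$ and to the factors of a join, but it carries no mechanism forcing the factors of a join to have equal independence number.

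More importantly, your proposed counterexample is correct, and it refutes the conjecture as stated. Let $G$ be the join of $C_{5}$ and $3K_{2}$ (every vertex of the $C_{5}$ adjacent to every vertex of the $3K_{2}$); the join of $2K_{2}$ and $3K_{2}$ works just as well and is one vertex smaller. For a vertex $v$ of the $C_{5}$ with neighbours $a,d$ and non-neighbours $b,c$ (which are adjacent), the maximal independent sets of $V(G)\setminus N[v]$ are $\{b\}$ and $\{c\}$, and $N(v)\setminus N(\{b\})=\{d\}$, $N(v)\setminus N(\{c\})=\{a\}$, each of independence number $1$. For a vertex $v=u_{1}$ of the $3K_{2}$ with partner $u_{1}'$, the set $V(G)\setminus N[v]$ induces $2K_{2}$, every maximal independent set $S$ there takes one endpoint of each of the two remaining edges, and $N(v)\setminus N(S)=\{u_{1}'\}$. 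So condition~2 holds at every vertex. On the other hand, every independent set of a join lies entirely in one factor, so the maximal independent sets of $G$ are exactly those of $C_{5}$ (size $2$) and of $3K_{2}$ (size $3$); hence $G$ is not well-covered and $G\notin\mathbf{W_2}$. Thus condition~2 does not imply condition~1, and Conjecture~\ref{w2conj} is false in its present form; it can only survive with an added hypothesis such as $G$ being well-covered (which is precisely Theorem~\ref{w2alpha}) or the complement of $G$ being connected. You have not proved the statement, but your work does something more useful: it disproves it, and the verification you sketch is complete.
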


\end{document}